\newcommand{\la}{\lambda}
\newcommand{\om}{\omega}
\newcommand{\be}{\begin{equation}}
	\newcommand{\ee}{\end{equation}}
\newcommand{\bea}{\begin{eqnarray}}
	\newcommand{\eea}{\end{eqnarray}}
\newcommand{\nn}{\nonumber}
\newcommand{\bee}{\begin{eqnarray*}}
	\newcommand{\eee}{\end{eqnarray*}}
\newcommand{\ba}{\begin{aligned}}
	\newcommand{\ea}{\end{aligned}}
\newcommand{\bp}{\begin{proof}}
	\newcommand{\ep}{\end{proof}}  
\newcommand{\br}{\begin{remark}}
	\newcommand{\er}{\end{remark}}  
\newcommand{\lb}{\label}
\newtheorem{thm}{Theorem}[section]
\newtheorem{cor}[thm]{Corollary}
\newtheorem{lem}[thm]{Lemma}
\newtheorem{prop}[thm]{Proposition}
\theoremstyle{definition}
\theoremstyle{remark}
\newcommand{\Field}{\mathbb{F}}
\begin{document}
	\title[]{Arithmetic progression in a finite field with prescribed norms }
	\author[]{Kaustav Chatterjee, Hariom Sharma, Aastha Shukla, and Shailesh Kumar Tiwari$^*$}
	%\address{ Department of Mathematics,  Indian Institute of Technology
		%Patna, BIHAR, INDIA.}
	%\email{kaustav0004@gmail.com}
	%\address{ Department of Mathematics, S. S. Govt. P.G. College Tigaon, HARYANA, INDIA.}
	%\email{hariomsharma638@gmail.com}
	%\address{ Department of Mathematics,  Indian Institute of Technology
		%Patna, BIHAR, INDIA.}
	%\email{aassthaa31@gmail.com}
	%\address{Department of Mathematics,  Indian Institute of Technology
		%Patna, BIHAR, INDIA.}
	%\email{shaileshiitd84@gmail.com}
	\thanks{*email: shaileshiitd84@gmail.com}
	\thanks{First author is supported by National Board for Higher Mathematics(IN), Ref No. 0203/6/2020-
		RD-11/7387.}
	\thanks{{\it Mathematics Subject Classification:} 12E20; 11T23}
	\thanks{{\it Key Words and Phrases:}  Finite field; Primitive element; Normal element; Characters; Norm}
	\maketitle
	%\vspace*{.6cm}
	\begin{abstract}
		Given a prime power $q$ and a positive integer $n$, let $\Field_{q^{n}}$ represents a finite extension of degree $n$ of the finite field ${\mathbb{F}_{q}}$. In this article, we investigate the existence of $m$ elements in arithmetic progression, where every element is primitive and at least one is normal with prescribed norms. Moreover, for $n\geq6,q=3^k,m=2$ we establish that there are only $10$ possible exceptions.  
	\end{abstract}
	\section{Introduction}
	Let $\Field_{q}$ be a finite field with $q$ elements, where $q=p^k$ for some prime $p$ and positive integer $k$. With respect to multiplication, the set $\Field_{q}^{*}$, consisting of the non-zero elements in $\Field_{q}$, forms a cyclic group and a generator of this multiplicative group is defined to be a primitive element of the finite field $\Field_{q}$. The finite field $\Field_{q}$ contains $\phi(q-1)$ primitive elements, with $\phi$ representing Euler's totient function. An element $\alpha\in\Field_{q^{n}}^*$ is primitive if and only if is a root of an irreducible polynomial of degree $n$ over $\Field_{q}$ and such an irreducible polynomial is termed as primitive polynomial. Let $\Field_{p^n}$ represents a field extension of $\Field_{q}$ of degree of $n$, where $n$ is a positive integer. For any $\alpha$ $\in\Field_{q^n}$, the elements $\alpha, \alpha^{q}, \ldots, \alpha^{q^{n-1}}$ are referred as the conjugates of $\alpha$ with respect to $\Field_{q}$.  An element $\alpha\in\Field_{q^n}$ is said to be normal over $\Field_{q}$ if the set of conjugates of $\alpha$ forms a basis of $\Field_{q^n}$ over $\Field_q$, while the basis itself is called normal basis. The \textit{norm} of an element $\alpha \in \Field_{q^n}$ over $\Field_{q}$, denoted by $N_{{q^n}/q}(\alpha)$, is the product of the conjugates of $\alpha$ over $\Field_q$, that is, $N_{\Field_{q^{n}}/\Field_{q}}(\alpha)=\alpha\cdot\alpha^{q}\cdot\alpha^{q^2}\ldots\alpha^{q^{n-1}}$. We refer reader \cite{RH} for further informations related to the existence of primitive normal elements. 
	
	Primitive elements play a foundational role in Random Number Generation, Signal Processing and Elliptic Curve Cryptography.  A lot of applications of primitive elements can be found in Coding theory and Cryptography [\cite{PP}, \cite{AMOV}], making the study of primitive polynomials and primitive elements an important research area. For any $\alpha,\beta\in\Field_{q^n}$, a pair $(\alpha,\beta)$ is referred as primitive normal pair if both $\alpha$ and $\beta$ are primitive as well as normal over $\Field_{q}$. In particular, for any $f(x)\in\Field_{q^{n}}(x)$, the existence of primitive normal pairs $(\alpha,f(\alpha))$ has been a much interesting research area and many researchers has worked in the direction [\cite{RKS},\cite{RAA},\cite{AMRA},\cite{HHDK}].  
	
	In $1987$, Lenstra and Schoof \cite{HWRJ} proved the \textit{The Primitive Normal Basis Theorem}, which guarantees that every extension $\Field_{q^{n}}$ over $\Field_{q}$ always contains a primitive normal element.  In $2000$, Cohen and Hachenberger \cite{SH} demonstrated the existence of primitive and normal element $\alpha\in\Field_{q}$ with prescribed trace and norm. Recently, Sharma et al. \cite{AMS} proved that for any polynomial $f(x)\in\Field_{q^{n}}[x]$ with minor restrictions, there exists a primitive normal pair $(\alpha,f(\alpha))$ such that $Tr_{\Field_{q^n}/\Field_q}(\alpha)=b$ and $N_{\Field_{q^n}/\Field_q}(\alpha)=a$, for any prescribed $a,b\in\Field_{q}$. In \cite{AV}, Lemos et al. investigated the existence of an element $\alpha\in\Field_{q^{n}}^*$ such that for fixed $\beta\in\Field_{q^{n}}^*$, any element in the set $\{\alpha,\alpha+\beta,\ldots,\alpha+(m-1)\beta\}$ is primitive and at least one of them is normal. In this article, we establish the sufficient condition for the existence of an element $\alpha\in\Field_{q^{n}}$ such that for fixed $\beta\in\Field_{q^n}^*$, each element in the $m$-term arithmetic progression $\alpha,\alpha+\beta,\ldots,\alpha+(m-1)\beta$ is primitive, at least one of them is normal with $N_{\Field_{q^n}/\Field_q}(\alpha+(i-1)\beta)=c_{i}$, where $c_{i}\in\Field_{q}$ for $i\in\{1,2,\ldots,m\}$. Here we must have $m$ should be less than or equal to the characteristic of $\Field_{q}$.

	The structure of this article is as follows: In Section \ref{Sec2}, we give basic definitions that will be used  throughout the article. Section \ref{Sec3} includes some characteristic functions and lemmas providing bounds on characteristic sums.  In Section \ref{Sec4}, we figure out  the sufficient condition for the existence of the mentioned arithmetic progressions, along with an additional condition derived through sieve methods. Section \ref{Sec5} contains numerical examples for further illustration.
	\section{Preliminaries}\lb{Sec2}
	Before proceeding further, we provide some essential notations and significant definitions, which will be used consistently throughout the article. Assume that $n$ is a positive integer, $q$ is a prime power and $\Field_{q}$ is a finite field with $q$ elements. 
	\subsection{Definitions}
	\begin{itemize}
		\item[1.] Let $G$ be an abelain group and $T$ be the subset of complex numbers containing elements with unit modulus. A character $\chi$ of $G$ is a homomorphism from $G$ into $T$ i.e., $\chi(g_{1}g_{2})=\chi(g_{1})\chi(g_{2})$ for all $g_{1},g_{2}\in G$. The set of characters of ${G}$, denoted by $\widehat{{G}}$, forms a group under multiplication. In addition, $G$ is isomorphic to $\widehat{{G}}$. In the context of a finite field $\Field_{q^{n}}$, an additive character relates to the additive group of $\Field_{q^{n}}$, while a multiplicative character corresponds to the multiplicative group of $\Field_{q^{n}}^*$.\\
		\item[2.]	Let $e|q^n-1$ and $\alpha \in \Field_{q^{n}}^{*}$. Suppose for any $d|e$  and $\beta \in\Field_{q^{n}}$, $\alpha=\beta^{d}$ implies $d=1$. Then we say that $\alpha$ is a $e$-free element. Thus $\alpha\in\Field_{q^{n}}^*$ is primitive $\iff$ $\alpha$ is $(q^{n}-1)$-free.\\
		\item[3.] 	Let $\alpha\in\Field_{q^{n}}$ and $g$ be any divisor of $x^n-1$. We say that $g$ is $\Field_{q}$-order of $\alpha$ if $\alpha=h\circ\beta$, where $\beta\in\Field_{q^{n}}$ and $h|g$ implies $h=1$. It is easy to observe that an element $\alpha\in\Field_{q^{n}}$ is normal if and only if $\alpha$ is $x^n-1$-free.\\
		\item[4.]  The Euler's totient function for any polynomial $f(x)\in\Field_{q}[x]$ is defined as follows:
		\begin{equation}\nonumber
			\Phi_{q}(f)=\Bigg|\Bigg(\frac{\Field_{q}[x]}{f\Field_{q}[x]}\Bigg)^{*}\Bigg|=q^{deg(f)}\cdot \prod_{p\in\mathcal{I}_{f}}^{}\Bigg(1-\frac{1}{p}\Bigg)
		\end{equation}
		where $\mathcal{I}_{f}$ represents the set of irreducible factors of $f$ and $f\mathbb{F}_{q}[x]$ is the ideal generated by $f$.\\
		\item[5.] The M\"{o}bius function for the set of polynomials over $\Field_{q}$ is defined as follows: In case when $f$ is product of $r$ distinct monic irreducible polynomials then $\mu_{q}(f)=(-1)^{r}$ and otherwise $\mu_{q}(f)=0$.
	\end{itemize}
	\section{Some useful results}\lb{Sec3}
	In this section initially we present the characteristic functions of $e$-free, $g$-free elements and the set of elements in $\Field_{q^{n}}$ with prescribed norm $c$.
	
	For any $e|(q^n-1)$, following the work of  Cohen and Huczynska \textbf{{\cite{SS}}}, the characteristic function of the $e$-free elements of $\Field_{q^n}^*$ is given by
	$$\rho_{e}:\alpha \mapsto{}\frac{\phi(e)}{e}\sum_{k|e}\frac{\mu(k)}{\phi(k)}\sum_{\chi_{k}}\chi_{k}(\alpha),$$
	where the inner sum is taken over all multiplicative characters of order $k$, $\mu(.)$ is the M\"{o}bius function and $\phi(.)$ is the Euler function.
	
	Given $g(x)|x^{n}-1$, the characteristic function of the set of $g$-free elements in $\Field_{q^{n}}$ is given by
	$$\kappa_{g}:\alpha \mapsto{}\frac{\Phi_{q}(g)}{q^{deg(g)}}\sum_{h|g}\frac{\mu_{q}(h)}{\Phi_{q}(h)}\sum_{\la_{h}}\la_{h}(\alpha),$$
	where the inner sum is taken over any additive characters of $\Field_{q}$-order $h$, $\mu_{q}(.)$ is the M\"{o}bius function on $\Field_{q}[x]$ and $\Phi_{q}(.)$ is the Euler function on $\Field_{q}[x]$.
	
	Also, for any $c\in\Field_q^*$,
	\begin{equation}\nonumber
		\begin{aligned}
			\pi_{c}:\alpha \mapsto \frac{1}{q-1}\sum_{\chi\in{\widehat{{\Field}_{q}^*}}}^{}\chi(N_{\Field_{q^n}/\Field_q}(\alpha)c^{-1})
		\end{aligned}
	\end{equation}
	is a characteristic function for the subset of $\Field_{q^n}$ containing those elements $\alpha$ such that $N_{\Field_{q^n}/\Field_q}(\alpha)=c$. Let $\chi_{q-1}$ represents a generator of the multiplicative group of characters $\widehat{{\Field}_{q}^*}$. Then, any $\chi\in\widehat{{\Field}_{q}^*}$ can be expressed as $\chi(\alpha)=\chi_{q-1}(\alpha^{i})$ for some $i\in\{1,2,\ldots,q-1\}$. Thus we get
	\begin{equation}\nonumber
		\begin{aligned}
			\pi_{c}(\alpha)&=\frac{1}{q-1}\sum_{i=1}^{q-1}\chi_{q-1}^{i}(N_{\Field_{q^n}/\Field_q}(\alpha)c^{-1})\\
			&=\frac{1}{q-1}\sum_{i=1}^{q-1}\bar{\chi}^{i}(\alpha)\chi_{q-1}(c^{-i}),
		\end{aligned}
	\end{equation}
	where $\bar{\chi}=\chi_{q-1}\circ N_{\Field_{q^n}/\Field_q}$ is a multiplicative character of $\Field_{q^n}^*$. Following \cite{AMS}, the order of $\bar{\chi}$ in $\widehat{{\Field}_{q^n}^*}$ is $q-1$ and thus there exists a multiplicative character $\chi_{q^{n}-1}\in\widehat{{\Field}_{q^n}^*}$ of order $q^{n}-1$ such that $\bar{\chi}=\chi_{q^n-1}^{{q^{n}-1}/{q-1}}$.
	
	Following two lemmas provide bounds on the character sums which will be useful to prove our sufficient condition.
	\begin{lem}\textbf{({\cite{LDQ}}, Theorem 5.5)}\lb{L1}
		Let $f(x)=\prod_{j=1}^{s}f_{j}(x)^{a_{j}}$ be a rational function over $\Field_{q^{n}}$, where $f_{j}(x)\in\Field_{q^{n}}[x]$ are polynomials and $a_{j}\in\mathbb{Z}\smallsetminus\{0\}$. Let $\chi\in\widehat{{\Field}_{q}^*}$ be a multiplicative character of order $d$. Consider the rational function $f(x)$ that is not of the form of $r(x)^{d}$, where $r(x)$ belongs to the field $\mathbb{F}(x)$ and $\mathbb{F}$ represents the algebraic closure of $\Field_{q^{n}}$. Then we have $$\Bigg|\sum_{\alpha\in\Field_{q^n} }^{}\chi(f(\alpha))\Bigg|\leq\Bigg(\sum_{i=1}^{s}deg(f_{i})-1\Bigg)q^{n/2}.$$	
	\end{lem}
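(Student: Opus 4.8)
The plan is to recognize Lemma \ref{L1} as the classical Weil bound for multiplicative character sums and to obtain it from the Riemann Hypothesis for curves over finite fields. Setting $Q = q^n$ and working over $\Field_{q^n}$, the quantity to estimate is $S = \sum_{\alpha \in \Field_{q^n}} \chi(f(\alpha))$, with the convention that $\chi$ vanishes at the zeros and poles of $f$. First I would note that only finitely many $\alpha$ are affected by this convention, so that after clearing denominators the problem is essentially that of a polynomial sum, while keeping track of how each distinct irreducible factor $f_1,\dots,f_s$ contributes to the final constant.

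The central device is the Kummer cover $C\colon y^{d} = f(x)$. The hypothesis that $f$ is not of the form $r(x)^{d}$ for $r(x) \in \overline{\Field_{q^n}}(x)$ is precisely what forces $C$ to be geometrically irreducible; this is what prevents the associated $L$-function
$$L(\chi,f,t) = \prod_{P}\bigl(1 - \chi(f)(P)\,t^{\deg P}\bigr)^{-1},$$
taken over the closed points of the line away from the zeros and poles of $f$, from degenerating into a product of trivial factors. I would then record that this $L$-function is a polynomial in $t$.

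Next I would form the logarithmic derivative $t\,\frac{d}{dt}\log L(\chi,f,t) = \sum_{r \ge 1} S_r\, t^{r}$, whose coefficients $S_r = \sum_{\alpha \in \Field_{q^{nr}}} \chi\bigl(N_{q^{nr}/q^{n}}(f(\alpha))\bigr)$ are the analogous sums over the successive extensions, so that the desired sum is $S = S_1$. Writing $L(\chi,f,t) = \prod_i (1 - \omega_i t)$, comparison of the coefficient of $t$ gives $S_1 = -\sum_i \omega_i$. The two facts that make the estimate sharp are that $L(\chi,f,t)$ has degree $\sum_{i=1}^{s} \deg(f_i) - 1$ and that each inverse root satisfies $\abs{\omega_i} = Q^{1/2} = q^{n/2}$; together these yield $\abs{S} \le \bigl(\sum_{i=1}^{s}\deg(f_i) - 1\bigr)q^{n/2}$.

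The hard part will be the bookkeeping that makes the constant come out exactly: establishing that $L(\chi,f,t)$ has the stated degree amounts to a genus and ramification computation for the cover $C$ in terms of the degrees $\deg(f_i)$, and the assertion $\abs{\omega_i} = q^{n/2}$ is the deep input, namely Weil's Riemann Hypothesis for function fields. Since Lemma \ref{L1} is quoted verbatim from \cite{LDQ}, in practice I would cite that source for the precise degree computation rather than reproduce the genus calculation here.
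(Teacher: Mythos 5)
The paper offers no proof of Lemma \ref{L1}: it is quoted verbatim from Fu and Wan (\cite{LDQ}, Theorem 5.5), so there is no internal argument to compare your sketch against. That said, your outline is the standard route to this Weil-type bound and is essentially the machinery behind the cited result: attach the Kummer sheaf (equivalently, the cover $y^d=f(x)$) to $\chi\circ f$ on the projective line minus the zeros and poles of $f$, show the associated $L$-function is a polynomial because the non-$d$-th-power hypothesis makes the sheaf geometrically nontrivial, compute its degree $\sum_{i=1}^{s}\deg(f_{i})-1$ by a ramification/Euler-characteristic (Grothendieck--Ogg--Shafarevich) count, and invoke Weil's Riemann Hypothesis for the size of the inverse roots. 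One correction: you assert that \emph{each} inverse root satisfies $\abs{\omega_i}=q^{n/2}$, but in general only $\abs{\omega_i}\leq q^{n/2}$ holds; purity can fail when the sheaf is unramified at infinity, i.e.\ when $d$ divides the total degree of $f$. For instance, with $\chi$ quadratic and $f(x)=x^{2}-a$, $a\neq 0$, the complete sum equals $-1$, so the single inverse root has absolute value $1$, not $q^{n/2}$. Since the estimate only needs the upper bound on $\abs{\omega_i}$, this overstatement does not break your argument, but it should be phrased as an inequality.
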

	\begin{lem}\textbf{({\cite{LDQ}}, Theorem 5.6)}\lb{L2}
		Let us Consider two rational functions, $f(x)$ and $g(x)$, both belonging to the field $\mathbb{F}_{q^n}(x)$. Express $f(x)$ as $\prod_{j=1}^{s} f_{j}(x)^{a_j}$, where each $f_j(x)$ are irreducible polynomials over the field $\mathbb{F}_{q^n}$ and $a_j\in\mathbb{Z}\smallsetminus \{0\}$. Let $D_{1}=\sum_{j=1}^{s}deg(f_{j})$, $D_{2}=max(deg(g(x)),0)$, $D_{3}$ is the degree of the denominator of $g(x)$ and the sum of the degrees of the irreducible polynomials dividing the denominator of $g(x)$ (excluding those that are equal to $f_{j}(x)$, for some $j=1,2,\ldots,s$) is equal to $D_{4}$. Let $\chi$ be a multiplicative character of $\Field_{q^n}^*$ and $\la$ is nontrivial additive character of $\Field_{q^{n}}$. Also, suppose that $g(x)$ is not of the form $ H(x)^{q^n}-H(x)$, $H(x)\in\mathbb{F}_{q^n}(x)$. Then
		\begin{equation} \nonumber
			\begin{aligned}
				\Bigg|\sum_{\alpha\in \Field_{q^{n}},f(\alpha)\neq 0,\infty, g(\alpha)\neq \infty }\chi(f(\alpha))\la(g(\alpha))\Bigg|\leq(D_{1}+D_{2}+D_{3}+D_{4}-1)q^{n/2}.
			\end{aligned}
		\end{equation}
	\end{lem}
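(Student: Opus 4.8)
The plan is to read this as a Weil-type estimate for a mixed multiplicative--additive character sum and to deduce it from the Riemann Hypothesis for curves over a finite field. I would work in the rational function field $K=\Field_{q^n}(x)$, whose closed points are the monic irreducible polynomials over $\Field_{q^n}$ together with the place at infinity. The data $(\chi\circ f,\ \la\circ g)$ determines a rank-one character $\Psi$ of the idele class group of $K$: the factor $\chi(f(x))$ contributes a Kummer (tame) character ramified at the zeros and poles of $f$, and the factor $\la(g(x))$ contributes an Artin--Schreier (wild) character ramified at the poles of $g$ and, when $\deg g>0$, at infinity. The hypothesis that $g$ is not of the form $H(x)^{q^n}-H(x)$ is exactly what keeps the additive part of $\Psi$ nontrivial, so that the sum does not collapse; if the multiplicative part happens to be trivial (that is, $f$ a perfect $d$-th power) the argument simply degenerates to the pure additive case, which is covered the same way.

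First I would attach to $\Psi$ its $L$-function
$$L(\Psi,T)=\prod_{P}\bigl(1-\Psi(P)\,T^{\deg P}\bigr)^{-1},$$
the product taken over closed points $P$ at which $\Psi$ is unramified. By the rationality theorem for $L$-functions over function fields, $L(\Psi,T)$ is a polynomial; write $\delta=\deg L(\Psi,T)$ and let $\omega_1,\dots,\omega_\delta$ be the reciprocals of its roots. The Riemann Hypothesis for curves (Weil) forces $\abs{\omega_i}=q^{n/2}$ for each $i$. Comparing the coefficient of $T$ in the two expressions $\log L(\Psi,T)=\sum_P\sum_{k\ge1}\Psi(P)^kT^{k\deg P}/k$ and $\log\prod_i(1-\omega_iT)=-\sum_i\sum_{k\ge1}\omega_i^kT^k/k$ identifies the degree-one term of the Euler product, namely the sum over degree-one points, with a power sum of the $\omega_i$. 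Up to the bounded contributions from the finitely many ramified places this reads
$$\sum_{\alpha\in\Field_{q^n},\,f(\alpha)\neq0,\infty,\,g(\alpha)\neq\infty}\chi(f(\alpha))\,\la(g(\alpha))=-\sum_{i=1}^{\delta}\omega_i,$$
so that the sum is bounded in modulus by $\delta\,q^{n/2}$.

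It then remains to prove $\delta\le D_1+D_2+D_3+D_4-1$, and this conductor computation is where the real work lies. By the Euler--Poincar\'e (Grothendieck--Ogg--Shafarevich) formula, $\delta=2g_K-2+\deg\mathfrak{f}_\Psi$ with $g_K=0$ the genus of $\mathbb{P}^1$, so I must bound the conductor $\mathfrak{f}_\Psi$ of $\Psi$ by computing its local exponent place by place. Each irreducible factor $f_j$ of $f$ contributes its degree through tame Kummer ramification, giving $D_1$; each pole of $g$ contributes through wild Artin--Schreier ramification, giving $D_3$ together with the surplus $D_4$ recorded by those denominators of $g$ that are not already among the $f_j$; and the place at infinity contributes $D_2=\max(\deg g,0)$. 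Summing these local exponents and subtracting $2$ for the genus term produces the stated bound.

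The main obstacle is exactly this local ramification bookkeeping. One must compute the Swan conductor of the Artin--Schreier part at each pole of $g$ and show it is governed by the pole order, reconcile the places that lie simultaneously in the support of $f$ and in the pole locus of $g$ so that no contribution is counted twice, and verify that the behaviour at infinity is correctly captured by $D_2$. Pinning down the exact additive constant --- the single $-1$ rather than a larger defect --- rests on $\mathbb{P}^1$ having genus zero and on the nondegeneracy of $\Psi$ forbidding it to factor through a divisor of smaller conductor.
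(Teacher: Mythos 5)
The paper contains no proof of Lemma \ref{L2}: it is imported verbatim from Fu and Wan \cite{LDQ} (Theorem 5.6) and used as a black box, so there is no internal argument to compare yours against; the only question is whether your sketch would prove the cited result. Your strategy is in fact the one used in the source: attach to the pair $(\chi\circ f,\la\circ g)$ a rank-one Kummer--Artin--Schreier character $\Psi$ on $\mathbb{P}^1$ over $\Field_{q^n}$, invoke rationality of $L(\Psi,T)$ and Weil's Riemann Hypothesis for curves to bound the inverse roots (note that in general one only gets $\abs{\omega_i}\leq q^{n/2}$, not equality --- weight-drop pieces of modulus $1$ can occur, harmlessly for an upper bound), and control the number $\delta$ of inverse roots through the Euler--Poincar\'e (Grothendieck--Ogg--Shafarevich) formula $\delta=2g-2+\deg\mathfrak{f}_{\Psi}$ with $g=0$.

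The genuine gap is that the conductor bound is the entire content of the theorem, and you defer it (``where the real work lies'') while describing bookkeeping that is demonstrably wrong. You assign the place at infinity the exponent $D_{2}=\max(\deg g,0)$; but at a wildly ramified place the local conductor exponent is the Swan conductor \emph{plus one}, so infinity contributes up to $D_{2}+1$. With your figures the formula would give $\delta\leq D_{1}+D_{2}+D_{3}+D_{4}-2$, a bound strictly stronger than the theorem and false: take $s=0$ (so $f=1$, $D_{1}=0$) and $g(x)=x^{2}$ with $p$ odd; the sum is a quadratic Gauss sum of modulus exactly $q^{n/2}$, whereas your count predicts $0$. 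The correct accounting is: exponent $\leq 1$ (tame) at each $f_{j}$ not dividing the denominator of $g$; exponent $\leq d_{P}+1$ at each irreducible $P$ dividing the denominator of $g$ with multiplicity $d_{P}$ (Swan $\leq d_{P}$, with equality when $p\nmid d_{P}$, and Artin--Schreier reduction needed when $p\mid d_{P}$), which summed over such $P$ gives at most $D_{3}+D_{4}$ plus the degrees of those $f_{j}$ dividing the denominator, already counted in $D_{1}$; and exponent $\leq D_{2}+1$ at infinity. Hence $\deg\mathfrak{f}_{\Psi}\leq D_{1}+D_{2}+D_{3}+D_{4}+1$ and $\delta\leq D_{1}+D_{2}+D_{3}+D_{4}-1$. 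A further point your exact identity glosses over: the theorem places no $d$-th-power hypothesis on $f$, so $\Psi$ may be unramified at some excluded points (e.g. when the order of $\chi$ divides $a_{j}$); such points re-enter the Euler product and contribute $O(1)$ terms to the coefficient comparison, which must be absorbed using the correspondingly smaller conductor. Until this local analysis is actually carried out, the proposal is an outline of the Fu--Wan proof rather than a proof.
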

	Let $r(>1)$ be a positive integer and $g\in\Field_{q}[x]$. We use $W(r)$ to represent the number of square-free divisors of $r$ and $W(g)$ indicates the number of square-free factors of $g$.
	\begin{lem}\textbf{({\cite{SS}}, Lemma 3.7)}\lb{L3}
		Let $\nu$ be a positive real number and $r\in\mathbb{N}$. Then we have $W(r)<\mathcal{C}\cdot r^{\frac{1}{\nu}}$, where $\mathcal{C}$ is defined as $\frac{2^u}{{({p_{1}p_{2}\ldots p_{u})}}^{\frac{1}{\nu}}}$ and $p_{1},p_{2},\ldots,p_{u}$ are primes $\leq 2^{\nu}$ that divide $r$.\\
	\end{lem}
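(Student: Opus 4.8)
The plan is to exploit the multiplicativity of $W$. Since $W(r)$ counts the square-free divisors of $r$, one has $W(r)=2^{\omega(r)}$, where $\omega(r)$ denotes the number of distinct prime factors of $r$; equivalently, writing the factorization $r=\prod_{i} p_{i}^{a_{i}}$ with the $p_{i}$ distinct primes and each $a_{i}\ge 1$, the function $W$ is multiplicative with $W(p^{a})=2$ for every prime power $p^{a}$. The natural quantity to study is therefore the ratio
\begin{equation}\nonumber
\frac{W(r)}{r^{1/\nu}}=\prod_{i}\frac{2}{p_{i}^{a_{i}/\nu}},
\end{equation}
which splits as a product of independent local contributions, one per prime divisor of $r$.

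The key idea is to partition this product according to the size of $p_{i}$ relative to the threshold $2^{\nu}$, which is exactly the point where a local factor crosses $1$. First I would observe that if $p_{i}>2^{\nu}$, then $p_{i}^{a_{i}/\nu}\ge p_{i}^{1/\nu}>(2^{\nu})^{1/\nu}=2$, so each such local factor $2/p_{i}^{a_{i}/\nu}$ is strictly less than $1$; consequently the sub-product over all primes exceeding $2^{\nu}$ is at most $1$ (and strictly below $1$ whenever such a prime occurs). Next, for the primes $p_{i}\le 2^{\nu}$, I would use $a_{i}\ge 1$ to bound $p_{i}^{a_{i}/\nu}\ge p_{i}^{1/\nu}$, whence $2/p_{i}^{a_{i}/\nu}\le 2/p_{i}^{1/\nu}$; multiplying these estimates over the $u$ primes $p_{1},\dots,p_{u}\le 2^{\nu}$ that divide $r$ yields precisely the factor $\mathcal{C}=2^{u}/(p_{1}p_{2}\cdots p_{u})^{1/\nu}$.

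Combining the two sub-products gives $W(r)/r^{1/\nu}\le \mathcal{C}$, and the strict inequality $W(r)<\mathcal{C}\cdot r^{1/\nu}$ follows as soon as the factorization of $r$ contains either a prime factor exceeding $2^{\nu}$ or a repeated prime factor, since both make one of the local bounds above strict. I expect the only delicate point to be bookkeeping rather than depth: one must ensure that $\mathcal{C}$ depends on $r$ only through which small primes divide it and not through their multiplicities, so the estimate for the $p_{i}\le 2^{\nu}$ block must be taken at exponent $1$ even when $a_{i}>1$, and one must confirm that the threshold $2^{\nu}$ is chosen exactly so that the large-prime block never inflates the estimate. No character sums or field-theoretic input is required here; the entire argument is an elementary estimate on the multiplicative function $W$.
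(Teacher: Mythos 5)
Your proof is correct and is precisely the canonical argument: the paper itself offers no proof of this lemma (it is quoted from Cohen--Huczynska \cite{SS}), and the underlying proof there is the same one you give, namely writing $W(r)/r^{1/\nu}$ as a product of local factors $2/p_i^{a_i/\nu}$, bounding the factors at primes $p_i>2^{\nu}$ by $1$ and those at primes $p_i\le 2^{\nu}$ by $2/p_i^{1/\nu}$, which multiplies out to $\mathcal{C}$. One point your equality analysis makes visible: this argument yields $W(r)\le \mathcal{C}\cdot r^{1/\nu}$, with equality exactly when $r$ is squarefree and all of its prime factors are at most $2^{\nu}$ (for instance $r=6$, $\nu=2$ gives $W(6)=4=\mathcal{C}\cdot 6^{1/2}$), so the strict inequality as quoted in the statement cannot hold unconditionally; the non-strict form you actually prove is the correct one, and it suffices for every application of the lemma made in Sections 4 and 5 of the paper, where the bound is only combined with inequalities of the form $q^{\frac{n}{2}-m}>2\,\mathcal{C}^{2}q^{\frac{2n}{\nu}}W(x^{n}-1)$.
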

	\begin{lem}\textbf{({\cite{HWRJ}}, Lemma 2.9)}\lb{L4}
		Let $q$ be a prime power and $n$ be a positive integer. Then we have $W(x^n-1)\leq 2^{\frac{1}{2}\{n+gcd(n,q-1)\}}$. Moreover, $W(x^n-1)\leq 2^n$ and $W(x^n-1)=2^n$ if and only if $n$ divides $q-1$. Additionally, we have $W(x^n-1)\leq 2^{\frac{3}{4}n}$ if $n\nmid q-1$.
	\end{lem}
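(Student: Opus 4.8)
The plan is to reduce everything to counting the number of distinct monic irreducible factors of $x^n-1$ over $\Field_{q}$. Since every square-free factor is precisely a product of some subset of the distinct irreducible factors, if $x^n-1$ has exactly $t$ distinct monic irreducible factors then $W(x^n-1)=2^{t}$. Thus all three assertions follow once $t$ is bounded appropriately.

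First I would strip off the inseparable part: writing $n=p^{a}m$ with $p=\mathrm{char}(\Field_{q})$ and $p\nmid m$, we have $x^{n}-1=(x^{m}-1)^{p^{a}}$, so $x^n-1$ and $x^m-1$ share the same set of distinct irreducible factors and hence the same $t$, while $x^m-1$ is separable with $m$ distinct roots in $\overline{\Field_{q}}$. I would also record that $\gcd(n,q-1)=\gcd(m,q-1)$, since $p\mid q$ forces $\gcd(p^{a},q-1)=1$; set $d=\gcd(m,q-1)=\gcd(n,q-1)$. The core step is then to count $t$ via the action of the Frobenius map $x\mapsto x^{q}$ on the group $\mu_{m}$ of $m$-th roots of unity: the distinct irreducible factors of $x^m-1$ correspond bijectively to the Frobenius orbits on $\mu_{m}$, the degree of a factor being the size of its orbit. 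An element $\zeta\in\mu_{m}$ is fixed by Frobenius iff $\zeta^{q-1}=1$ iff $\zeta\in\Field_{q}$, and the number of such elements is exactly $\gcd(m,q-1)=d$ (the common roots of $x^m-1$ and $x^{q-1}-1$). Hence there are $d$ orbits of size $1$, and the remaining $m-d$ roots lie in orbits of size at least $2$, contributing at most $(m-d)/2$ further orbits. Therefore
\[
t \le d+\frac{m-d}{2}=\frac{m+d}{2}\le\frac{n+d}{2}=\frac{n+\gcd(n,q-1)}{2},
\]
which yields $W(x^n-1)=2^{t}\le 2^{\frac12\{n+\gcd(n,q-1)\}}$.

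The remaining claims are then short. Since $t\le\deg(x^n-1)=n$, we get $W(x^n-1)\le 2^{n}$ unconditionally. For the equality case, $t=n$ forces the product of the $n$ distinct irreducible factors, of degree at least $t=n$, to exhaust $x^n-1$ exactly; so every factor is linear and the polynomial is square-free, i.e. $x^n-1$ splits into $n$ distinct linear factors over $\Field_{q}$. This happens iff $\Field_{q}$ contains all $n$ distinct $n$-th roots of unity, which is equivalent to $n\mid q-1$; conversely $n\mid q-1$ guarantees both separability (as then $p\nmid n$) and full splitting. Finally, if $n\nmid q-1$ then $\gcd(n,q-1)$ is a proper divisor of $n$, whence $\gcd(n,q-1)\le n/2$, and the main bound gives $W(x^n-1)\le 2^{(n+n/2)/2}=2^{3n/4}$.

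I expect the only delicate bookkeeping to be the reduction to the separable case (passing from $n$ to $m$) and the precise equality characterization; the Frobenius-orbit count that produces the $(m+d)/2$ estimate is the essential and most conceptual ingredient, and once it is in place the rest is elementary divisor arithmetic.
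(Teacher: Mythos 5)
Your proof is correct, and since the paper itself offers no proof of this lemma (it is quoted directly from Lenstra and Schoof \cite{HWRJ}, Lemma 2.9), the relevant comparison is with that source: your argument --- reducing to the separable case via $x^{n}-1=(x^{m}-1)^{p^{a}}$, identifying $W(x^n-1)=2^{t}$ with $t$ the number of Frobenius orbits (equivalently, $q$-cyclotomic cosets) on the $m$-th roots of unity, and counting the $\gcd(m,q-1)$ singleton orbits separately --- is essentially the standard proof given there. All steps check out, including the equality characterization $t=n\iff n\mid q-1$ and the passage from ``$\gcd(n,q-1)$ is a proper divisor of $n$'' to $\gcd(n,q-1)\le n/2$, which yields the $2^{\frac{3}{4}n}$ bound.
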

	
	We know that norm of a primitive element is also primitive. Moreover, Sharma et al. {{\cite{AMS}}} has proved the following lemma in a more general context.
	\begin{lem}\textbf{({\cite{AMS}}, Lemma 3.1)}\lb{L5}
		Let $e$ be a positive divisor of $q^n-1$ and $\delta=$gcd($e,q-1$). Additionally, let $L_{e}$ represents the largest divisor of $e$ for which gcd($L_{e},\delta$)=1. Then an element $\alpha\in\Field_{q^n}^*$ is $l$-free $\iff$ $\alpha$ is $L_{e}$-free and $N_{\Field_{q^n}/\Field_q}(\alpha)$ is $\delta$-free. 
	\end{lem}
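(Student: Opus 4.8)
The plan is to reduce both notions of freeness to explicit divisibility conditions on the discrete logarithm of $\alpha$, and then to verify a purely number-theoretic identity about the prime supports of $e$, $L_e$, and $\delta$. First I would fix a primitive element $g$ of $\Field_{q^n}^*$ and write $\alpha=g^{a}$ for an integer $a$ modulo $q^n-1$. For a divisor $d\mid q^n-1$, the equation $\alpha=\beta^{d}$ is solvable for $\beta\in\Field_{q^n}^*$ precisely when $d\mid a$ (since $\gcd(d,q^n-1)=d$). Consequently $\alpha$ is $e$-free if and only if $\gcd(a,e)=1$, and likewise $\alpha$ is $L_e$-free if and only if $\gcd(a,L_e)=1$. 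This converts the freeness of $\alpha$ into coprimality of its exponent with the relevant divisor.

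Next I would treat the norm. Writing $s=(q^n-1)/(q-1)=1+q+\cdots+q^{n-1}$, one has $N_{\Field_{q^n}/\Field_q}(\alpha)=\alpha^{s}=g^{as}$. Because $s\mid q^n-1$, the element $h:=g^{s}$ has order $(q^n-1)/\gcd(s,q^n-1)=(q^n-1)/s=q-1$, so $h$ is a generator of $\Field_q^*$ and $N_{\Field_{q^n}/\Field_q}(\alpha)=h^{a}$. Since $\delta\mid q-1$ (and hence $a$ is well-defined modulo $\delta$), applying the same power-characterization inside the cyclic group $\Field_q^*$ shows that $N_{\Field_{q^n}/\Field_q}(\alpha)$ is $\delta$-free if and only if $\gcd(a,\delta)=1$.

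It then remains to establish the equivalence $\gcd(a,e)=1 \iff \gcd(a,L_e)=1 \text{ and } \gcd(a,\delta)=1$, which is a statement about prime supports. Every prime $p\mid e$ either divides $q-1$, in which case $p\mid\gcd(e,q-1)=\delta$, or fails to divide $q-1$, in which case $p\nmid\delta$ and its full power in $e$ is coprime to $\delta$, so $p\mid L_e$; conversely both $L_e$ and $\delta$ divide $e$. Thus $e$ and $L_e\cdot\delta$ have the same set of prime divisors, and coprimality of $a$ with $e$ is equivalent to coprimality of $a$ with both $L_e$ and $\delta$. Chaining the three reductions yields the lemma.

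The only genuine subtlety is the norm step: one must verify that $h=g^{s}$ is indeed a generator of $\Field_q^*$ and that $\delta$-freeness of the norm is governed by $a$ modulo $\delta$, rather than by $a$ modulo $q^n-1$. Once this identification of the norm's index with $a$ is in place, the reduction to freeness of $\alpha$ itself and the final combinatorial comparison of prime supports are routine.
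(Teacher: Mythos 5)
Your proof is correct, but there is nothing in the paper to compare it against: the paper states this lemma as a quoted result (Lemma 3.1 of \cite{AMS}) and gives no proof, so your argument serves as a self-contained replacement for the citation. The route you take --- fixing a primitive root $g$, writing $\alpha=g^a$, and converting every freeness condition into a coprimality condition on the exponent $a$ --- is the standard discrete-logarithm treatment of freeness, and each of your three steps checks out: $\alpha$ is $e$-free iff $\gcd(a,e)=1$ (using $\gcd(d,q^n-1)=d$ for $d\mid e$); the norm satisfies $N_{\Field_{q^n}/\Field_q}(\alpha)=g^{as}=h^{a}$ with $h=g^{s}$ of order $q-1$, so $\delta$-freeness of the norm, read inside $\Field_q^*$ (the correct reading, since $N_{\Field_{q^n}/\Field_q}(\alpha)\in\Field_q^*$ and $\delta\mid q-1$), amounts to $\gcd(a,\delta)=1$; and $e$ and $L_e\delta$ have the same prime divisors because $L_e$ is, by maximality, exactly the product of the full prime powers $p^{v_p(e)}$ over primes $p\mid e$ with $p\nmid\delta$, so $\gcd(a,e)=1$ is equivalent to $\gcd(a,L_e)=\gcd(a,\delta)=1$. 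Two minor points: the ``$l$-free'' in the statement is a typo for ``$e$-free'', which you silently and correctly repaired; and the step ``$h$ has order $q-1$, hence generates $\Field_q^*$'' rests on the uniqueness of the subgroup of order $q-1$ in the cyclic group $\Field_{q^n}^*$ (equivalently, $h^{q-1}=1$ forces $h\in\Field_q^*$) --- you assert this correctly, though a one-line justification would make the proof fully airtight.
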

	Since $L_{q^n-1}$ is the largest divisor of $q^n-1$ with $gcd(L_{q^n-1},q-1)=1$, we can express $L_{q^n-1}$ as $\frac{q^{n}-1}{(q-1)gcd(n,q-1)}$ and this further implies $W(L_{q^n-1})\leq W(q^{n}-1)$.
	\section{Main Results}\lb{Sec4}
	Let $\beta\in\Field_{q^{n}}^*$ and $c_{1},c_{2},\ldots,c_{m}\in\Field_{q}^*$. We shall establish a sufficient condition which ensures the existence of an element $\alpha\in\Field_{q^{n}}^*$ such that the elements $\alpha,\alpha+\beta,\ldots,\alpha+(m-1)\beta$ are all primitive and at least one of them is normal together with $N_{\Field_{q^{n}}/\Field_{q}}(\alpha)=c_{1}$, $N_{\Field_{q^{n}}/\Field_{q}}(\alpha+\beta)=c_{2}$,\ldots, $N_{\Field_{q^{n}}/\Field_{q}}(\alpha+(m-1)\beta)=c_{m}$. Let $e_{1},e_{2},\ldots,e_{m}$ be positive divisors of $q^{n}-1$, $g\in\Field_{q}[x]$ be a  divisor of $x^{n}-1$ and $\delta_{i}=$gcd($e_{i},q-1)$ for $i=1,2,\ldots,m$. Further, let ${L}_{e_{i}}$ be the largest divisor of $e_{i}$ such that $gcd(L_{e_{i}},\delta_{i})=1$ for $i=1,2,\ldots,m$. To proceed further, let us consider the following definitions.
	\subsection{Definitions.}
	\begin{itemize}
		\item[1.] For $m\geq 3$, let $\mathcal{S}_{m}$ be the set containing the pairs $(q,n)\in\mathbb{N}\times\mathbb{N}$ such that for any $\beta\in\Field_{q^{n}}^*$ and a subset $\{c_{i}\in\Field_{q}:1\leq i \leq m\}\subset\Field_{q}^*$, there exists an element $\alpha\in\Field_{q^{n}}^*$ such that the elements in the set $\{\alpha+(i-1)\beta:1\leq i \leq m\}$ are all primitive, at least one is normal and $N_{\Field_{q^n}/\Field_q}(\alpha+(i-1)\beta)=c_{i}$ for $i=1,2,\dots,m$. \\
		\item[2.] For $m\in\mathbb{N}$, we define $\mathcal{C}_{m}$ to be the set containing the $m$-tuples $(c_{1},c_{2},\ldots,c_{m}),$ where $c_{i}\in\Field_{q}^*$ be such that $c_{i}$ is $\delta_{i}$-free for $i=1,2,\dots,m$.\\
		\item[3.] For any $(c_{1},c_{2},\ldots,c_{m})\in\mathcal{C}_{m}$, $\Psi_{c_{1},c_{2},\ldots,c_{m}}({e_{1}},{e_{2}},\ldots,{e_{m}},g)$ denotes the cardinality of the set containing the elements $\alpha\in\Field_{q^{n}}^*$ that satisfy the following conditions:
		\begin{enumerate}
			\item[a.] $\alpha+(i-1)\beta$ is ${e_{i}}$-free for $i=1,2,\ldots,m$,
			\item[b.] $\alpha+(j-1)\beta$ is $g$-free for some $j\in\{1,2,\ldots,m\}$,
			\item[c.] $N_{\Field_{q^n}/\Field_q}(\alpha+(i-1)\beta)=c_{i}$.
		\end{enumerate}
		\item[4.] For any $(c_{1},c_{2},\ldots,c_{m})\in\mathcal{C}_{m}$ and $j\in\{1,2,\ldots,m\}$, we define the set $\Psi_{c_{1},c_{2},\ldots,c_{m}}({e_{1}},{e_{2}},\ldots,{e_{m}},g,j)$ as the number of the elements $\alpha\in\Field_{q^{n}}^*$ that satisfy the following conditions:
		\begin{enumerate}
			\item[a.] $\alpha+(i-1)\beta$ is ${e_{i}}$-free for $i=1,2,\ldots,m$,
			\item[b.] $\alpha+(j-1)\beta$ is $g$-free,
			\item[c.] $N_{\Field_{q^n}/\Field_q}(\alpha+(i-1)\beta)=c_{i}$.
		\end{enumerate}
	\end{itemize}
	We shall use the following abbreviations consistently throughout this article:
	\begin{equation}\nonumber
		\begin{aligned}
			\bar{c}=&(c_{1},c_{2},\ldots,c_{m}),~ \overline{L_{{e}}}=(L_{e_{1}},L_{e_{2}},\ldots,L_{e_{m}}), W(\overline{L_{{e}}})=\prod_{i=1}^{m}W({L_{{e_{i}}}})\\
			&\text{and}~ \bar{e}=\overline{q^n-1} ~\text{when }~ e_{i}=q^n-1 ~\text{for}~ i=1,2,\ldots,m.
		\end{aligned}
	\end{equation}
	Thus we have
	\begin{equation}\lb{eq1}
		\begin{aligned}
			\Psi_{\bar{c}}(\overline{L_{{e}}},g)\geq \frac{1}{m}\sum_{j=1}^{m}\Psi_{\bar{c}}(\overline{L_{{e}}},g,j).
		\end{aligned}
	\end{equation}
	We now prove our principal result, as given in the following.
	\begin{thm}
		Let $q,n,m$ be positive integers such that $q$ is a prime power and $n>2m$. Suppose that
		\begin{equation}\nonumber
			\begin{aligned}
				q^{\frac{n}{2}-m}\geq mW(\overline{L_{{q^{n}-1}}})W(x^{n}-1).
			\end{aligned}
		\end{equation}
		Then $(q,n)\in \mathcal{S}_{m}$.
	\end{thm}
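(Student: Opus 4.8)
The plan is to show that under the stated hypothesis the quantity $\Psi_{\bar c}(\overline{L_{q^n-1}},x^n-1)$ is strictly positive for every admissible tuple $\bar c=(c_1,\dots,c_m)$, where we specialise $e_i=q^n-1$ (so $\delta_i=q-1$) and $g=x^n-1$. By Lemma \ref{L5}, an element $\gamma\in\Field_{q^n}^*$ is primitive (that is, $(q^n-1)$-free) exactly when it is $L_{q^n-1}$-free and $N_{\Field_{q^n}/\Field_q}(\gamma)$ is $(q-1)$-free; since the norms are prescribed to equal the $c_i$, we restrict attention to $\bar c\in\mathcal{C}_m$ (each $c_i$ being $\delta_i$-free is precisely the necessary condition that $c_i$ be the norm of a primitive element). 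Positivity of $\Psi_{\bar c}(\overline{L_{q^n-1}},x^n-1)$ then produces an $\alpha$ for which every $\alpha+(i-1)\beta$ is primitive, at least one is normal (the ``$g$-free for some $j$'' clause, with $g=x^n-1$), and the norms are $c_1,\dots,c_m$, i.e.\ $(q,n)\in\mathcal{S}_m$. By (\ref{eq1}) it suffices to bound each $\Psi_{\bar c}(\overline{L_{q^n-1}},x^n-1,j)$ from below, and I would fix $j$ and work with that.

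Next I would insert the three characteristic functions: $\rho_{L_{q^n-1}}$ applied to each $\alpha+(i-1)\beta$ (freeness), $\kappa_{x^n-1}$ applied to $\alpha+(j-1)\beta$ (normality), and $\pi_{c_i}$ applied to each $\alpha+(i-1)\beta$ (prescribed norm). Writing $\bar\chi=\chi_{q^n-1}^{(q^n-1)/(q-1)}$ as in the preliminaries and collapsing all multiplicative characters acting on $\alpha+(i-1)\beta$ into the single character $\chi_{k_i}\bar\chi^{l_i}$, the expression becomes a weighted sum, over divisors $k_i\mid L_{q^n-1}$, $h\mid x^n-1$, indices $l_i$, and the associated characters, of the inner sums
\begin{equation}\nonumber
S=\sum_{\alpha}\prod_{i=1}^{m}\bigl(\chi_{k_i}\bar\chi^{l_i}\bigr)\bigl(\alpha+(i-1)\beta\bigr)\,\lambda_h\bigl(\alpha+(j-1)\beta\bigr),
\end{equation}
the outer constant being $\Theta=\bigl(\prod_i\frac{\phi(L_{q^n-1})}{L_{q^n-1}}\bigr)\frac{\Phi_q(x^n-1)}{q^{n}}\frac{1}{(q-1)^m}$. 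Because $\gcd(L_{q^n-1},q-1)=1$ (Lemma \ref{L5}), the orders of $\chi_{k_i}$ and of $\bar\chi^{l_i}$ are coprime, so $\chi_{k_i}\bar\chi^{l_i}$ is trivial iff $k_i=1$ and $l_i=q-1$. Hence the unique main term (all multiplicative and additive characters trivial, $h=1$) has $S=q^n-m$, the $m$ excluded values being $\alpha=-(i-1)\beta$, with coefficient $1$ since $c_i^{q-1}=1$; this yields the principal contribution $\Theta(q^n-m)$.

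For every remaining combination the combined multiplicative character is nontrivial or $\lambda_h$ is nontrivial, so $S$ is a character sum of the type treated in Lemmas \ref{L1} and \ref{L2} with rational function $f(\alpha)=\prod_i(\alpha+(i-1)\beta)^{b_i}$ (at most $m$ distinct linear factors, not a $(q^n-1)$-th power) and, when present, a degree-one additive argument; since a nonzero linear polynomial is never of the form $H^{q^n}-H$, the hypotheses of those lemmas are met and $|S|\le m\,q^{n/2}$ uniformly. Summing the moduli of the coefficients over all combinations gives $\sum_{k\mid L_{q^n-1}}|\mu(k)|=W(L_{q^n-1})$ per freeness index, $q-1$ per norm index, and $W(x^n-1)$ for the additive part, i.e.\ total weight $W(\overline{L_{q^n-1}})\,(q-1)^m\,W(x^n-1)$, whose factor $(q-1)^m$ cancels the matching one in $\Theta$. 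Collecting terms yields
\begin{equation}\nonumber
\Psi_{\bar c}(\overline{L_{q^n-1}},x^n-1,j)\ge\Bigl(\prod_i\frac{\phi(L_{q^n-1})}{L_{q^n-1}}\Bigr)\frac{\Phi_q(x^n-1)}{q^{n}}\Bigl[\frac{q^n-m}{(q-1)^m}-m\,q^{n/2}\,W(\overline{L_{q^n-1}})\,W(x^n-1)\Bigr].
\end{equation}

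Positivity therefore follows once $q^n-m>(q-1)^m\,m\,q^{n/2}\,W(\overline{L_{q^n-1}})\,W(x^n-1)$. Granting the hypothesis $q^{n/2-m}\ge m\,W(\overline{L_{q^n-1}})\,W(x^n-1)$, the right-hand side is at most $(q-1)^m q^{n-m}\le(q^m-1)q^{n-m}=q^n-q^{n-m}$, which is $<q^n-m$ because $n>2m$ forces $q^{n-m}>m$; this closes the estimate and, via (\ref{eq1}), gives $\Psi_{\bar c}(\overline{L_{q^n-1}},x^n-1)>0$ and hence $(q,n)\in\mathcal{S}_m$. The main obstacle I anticipate is the careful verification that the genericity conditions of Lemmas \ref{L1} and \ref{L2} hold for every non-principal combination (so the uniform bound $m q^{n/2}$ is legitimate) together with the precise bookkeeping of the coefficient weights ensuring the clean cancellation of the $(q-1)^m$ factors.
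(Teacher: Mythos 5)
Your argument is correct and reaches the theorem, but it organizes the estimate differently from the paper, and the difference is worth recording. The skeleton is shared: reduce to positivity of $\Psi_{\bar c}(\overline{L_{q^n-1}},x^n-1,j)$ via (\ref{eq1}), expand with $\rho$, $\kappa$, $\pi$, and invoke Lemmas \ref{L1} and \ref{L2}. Where you diverge is the handling of the norm characters: you merge each $\bar\chi^{l_i}$ with $\chi_{k_i}$ into a single multiplicative character and use $\gcd(L_{q^n-1},q-1)=1$ to conclude that this combined character is trivial only when $k_i=1$ and $l_i=q-1$, so there is exactly one principal term $q^n-m$, every other combination receives the uniform Weil bound $m q^{n/2}$, and the coefficient weight $(q-1)^m W(\overline{L_{q^n-1}})W(x^n-1)$ cancels the $(q-1)^{-m}$ in the outer constant. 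The paper instead isolates the block $\boldsymbol{\chi}_{\bar 1,1,\bar c}$ (trivial $\bar d$ and $h$, norm characters still summed) and estimates it through its Cases (1)--(m+1), graded by how many $k_i$ are nontrivial, exploiting the exact vanishing of complete single-factor character sums and two binomial identities; the terms with $(\bar d,h)\neq(\bar 1,1)$ get the cruder bound $m^2q^{\frac n2+m}$, which hides the loss $(q-1)^m\le q^m$. Your route buys brevity --- no binomial bookkeeping --- and your final criterion $q^n-m>(q-1)^m\, m\, q^{n/2}\,W(\overline{L_{q^n-1}})W(x^n-1)$ is in fact marginally weaker than the paper's, the factor $(q-1)^m$ replacing $q^m$; you then absorb the non-strict hypothesis via $(q-1)^m\le q^m-1$ and $q^{n-m}>m$, whereas the paper gets the same slack from the sharper lower-order terms $q^{n/2}(q-1)^m-m(q-1)^{m-1}$ that its case analysis produces. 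Two details you flagged but should nail down explicitly: the inner sums run over $\Field_{q^n}\smallsetminus Z$ while Lemmas \ref{L1} and \ref{L2} bound sums over all of $\Field_{q^n}$, so you must add at most $m$ boundary terms (those indices whose combined character is trivial) and check they stay inside $mq^{n/2}$, which they do since $m-1<q^{n/2}$; and when $h=1$ but the combined multiplicative character is nontrivial you must cite Lemma \ref{L1} rather than Lemma \ref{L2} (the latter's hypothesis on the additive argument fails for $y=0$), with the required non-$(q^n-1)$-th-power condition following from your coprimality remark. Both are routine and are absorbed by the slack in your final inequality.
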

	\begin{proof}
		Let $e_{1},e_{2},\ldots,e_{m}$ be positive divisors of $q^{n}-1$, $g\in\Field_{q}[x]$ be a divisor of $x^{n}-1$ and $\beta\in\Field_{q^{n}}^{*}$. Let $Z=\{-(i-1)\beta:1\leq i\leq m\}$. From \ref{eq1}, using the definitions of the characteristic functions $\rho_{e}$, $\kappa_{g}$ and $\pi_{c}$, we have 
		\begin{equation}\lb{eq2}
			\begin{aligned}
				\Psi_{\bar{c}}(\overline{L_{{e}}},g)&\geq \frac{1}{m}\sum_{\alpha\in\Field_{q^{n}}\smallsetminus Z}^{}\bigg[\prod_{i=1}^{m}\rho_{L_{e_{i}}}(\alpha+(i-1)\beta)\pi_{c_{i}}(\alpha+(i-1)\beta)\sum_{j=1}^{m}\kappa_{g}(\alpha+(j-1)\beta)\bigg]\\& 
				=\frac{{\mathfrak{C}}}{m}\underset{1\leq i\leq m}{\sum_{d_{i}|L_{e_{i}}}^{}}\underset{}{\underset{h|g}{\sum}}\frac{\mu}{\phi}(d_{1},d_{2},\ldots,d_{m},h) \underset{1\leq i\leq m}{\sum_{\chi_{d_{i}}}^{}}\sum_{{\la_{h}}}^{}\boldsymbol{\chi}_{\bar{d},h,\bar{c}},
			\end{aligned}
		\end{equation}
		where  $\mathfrak{C}=\frac{\phi(L_{e_{1}})\phi(L_{e_{2}})\ldots\phi(L_{e_{m}})\Phi_{q}(g)}{L_{e_{1}}L_{e_{2}}\ldots L_{e_{m}}q^{deg(g)}(q-1)^m}$, $\frac{\mu}{\phi}(d_{1},d_{2},\ldots,d_{m},h)=\frac{\mu(d_{1})\mu(d_{2})\ldots\mu(d_{m})\mu_{q}(h)}{\phi(d_{1})\phi(d_{2})\ldots\phi(d_{m})\Phi_{q}(h)}$ 
		and 
		\begin{equation}\nonumber
			\begin{aligned}
				\boldsymbol{\chi}_{\bar{d},h,\bar{c}}=\underset{\underset{}{}}{\sum_{k_{1},k_{2},\ldots,k_{m}=1}^{q-1}}\chi_{q-1}(c_{1}^{-k_{1}}c_{2}^{-k_{2}}\ldots &c_{m}^{-k_{m}})\sum_{\alpha\in\Field_{q^{n}}\smallsetminus Z}^{}\prod_{i=1}^{m}\chi_{d_{i}}(\alpha+(i-1)\beta)\\&\times{\bar{\chi}}^{k_{i}}(\alpha+(i-1)\beta)\sum_{j=1}^{m}\la_{h}(\alpha+(j-1)\beta).
			\end{aligned}
		\end{equation}
		Consider $\chi_{q^{n}-1}$ a generator of the multiplicative cyclic group $\Field_{q^{n}}^*$. Thus, for $i\in\{1,2,\ldots,m\}$ there exist $t_{i}\in\{0,1,2,\ldots,q^n-2\}$ such that $\chi_{d_{i}}(\alpha)=\chi_{q^n-1}^{t_{i}}(\alpha)$. Further, there exists $y\in\Field_{q^{n}}$ such that $\la_{h}(\alpha)=\widehat{\la_{0}}(y\alpha)$. Here $\widehat{\la}_{0}$ is the additive character of $\Field_{q^n}$ defined by $\widehat{\la}_{0}(\alpha)=\la_{0}(Tr_{\Field_{q^n}/\Field_q}(\alpha))$, where $\la_{0}$ is the canonical additive character of $\Field_{q}$. Hence
		\begin{equation}\lb{eq3}
			\begin{aligned}
				\boldsymbol{\chi}_{\tilde{d},h,\tilde{c}}=\underset{\underset{}{}}{\sum_{k_{1},k_{2},\ldots,k_{m}=1}^{q-1}} \chi_{q-1}(c_{1}^{-k_{1}}c_{2}^{-k_{2}}\ldots &c_{m}^{-k_{m}})\sum_{j=1}^{m}\widehat{\la_{0}}((j-1)y\beta)\\&\times\sum_{\alpha\in\Field_{q^{n}}\smallsetminus {Z}}^{}\chi_{q^{n}-1}(F(\alpha))\widehat{\la_{0}}(y\alpha).
			\end{aligned}
		\end{equation}
		where $F(x)=\prod_{i=1}^{m}\{x+(i-1)\beta\}^{{\frac{q^{n}-1}{q-1}k_{i}+t_{i}}}$. If we have $F(x)=H(x)^{q^n-1}$ for some $H(x)\in\mathbb{F}_{q^n}(x)$, then comparing the powers of $x+(i-1)\beta$ we get $\frac{q^{n}-1}{q-1}k_{i}+t_{i}=(q^n-1)r_{i}$, for some non negative integer $r_{i}$. From this equation it follows that $t_{i}=(q^n-1)(r_{i}-\frac{k_{i}}{q-1})\geq (q^n-1)(r_{i}-1)$. Since $t_{i}\in\{0,1,\ldots,q^n-2\}$, the latter one implies that $r_{i}=1$. Thus we have $t_{i}=(q^n-1)(1-\frac{k_{i}}{q-1})$ for $i=1,2,\ldots,m$. As $\chi_{d_{i}}=\chi_{q^{n}-1}^{t_{i}}$, there is a positive integer $k_{i}^{'}$ such that $t_{i}d_{i}=(q^n-1)k_{i}^{'}$, which further implies $k_{i}^{'}(q-1)=d_{i}(q-1-k_{i})$. Since $gcd(q-1,d_{i})=1$, it follows that $d_{i}|k_{i}^{'}$. Thus we have $q^n-1|t_{i}$, which is possible only if $t_{i}=0$. Hence $\bar{t}=\bar{0}$, that is,  $\bar{d}=\bar{1}$.
		
		Moreover, we have $yx\neq H(x)^{q^n}-H(x)$ for any $H(x)\in\mathbb{F}_{q^n}(x)$ unless we have $H(x)$ is constant and $y=0$, that is $h=1$. Consequently, if $\bar{d}\neq \bar{1}$ or $h\neq 1$ then from the Lemma \ref{L2} we have
		\begin{equation}\lb{eq4}
			\begin{aligned}
				|\boldsymbol{\chi}_{\tilde{d},h,\tilde{c}}|&\leq m^{2}q^{\frac{n}{2}+m}.
			\end{aligned}
		\end{equation} 
		From (\ref{eq3}), we have
		\begin{equation}\lb{eq5}
			\begin{aligned}
				\boldsymbol{\chi}_{\tilde{1},1,\tilde{c}}=m\underset{\underset{}{}}{\sum_{k_{1},k_{2},\ldots,k_{m}=1}^{q-1}}\chi_{q-1}(c_{1}^{-k_{1}}c_{2}^{-k_{2}}\ldots c_{m}^{-k_{m}})\sum_{\alpha\in\Field_{q^{n}}\smallsetminus {Z}}^{}\chi_{q^{n}-1}(\bar{F}(\alpha)),
			\end{aligned}
		\end{equation}
		where $\bar{F}(x)=\prod_{i=1}^{m}\{x+(i-1)\beta\}^{{\frac{q^{n}-1}{q-1}k_{i}}}$. Suppose that $F_{i}(x)=x+(i-1)\beta$ for $i=1,2,\ldots,m$. We consider the following cases based on possible values of $d_{i}$'s.\\
		
		$\textbf{Case(1)}$: Let $S_{1}$ be the case when $k_{i}=q-1$ for $i=1,2,\ldots,m$. Clearly, we have $S_{1}=m(q^{n}-m)$.
		
		$\textbf{Case(2)}$: Let $S_{2}$ be the case when for fixed $i\in\{1,2,\ldots,m\}$, $k_{i}$ takes value in $\{1,2,\ldots,q-2\}$ and rest others are equal to $(q-1)$. There are total `$m$' such possibilities. Thus 
		\begin{equation}\nonumber
			\begin{aligned}
				|S_{2}|&\leq m^{2}\Bigg|\sum_{k_{i}=1}^{q-2}\chi_{q-1}(c_{i}^{-k_{i}})\sum_{\alpha\in\Field_{q^{n}}\smallsetminus Z}^{}\chi_{q^{n}-1}^{{\frac{q^{n}-1}{q-1}k_{i}}}(F_{i}(\alpha))\Bigg|.
			\end{aligned}
		\end{equation}
		Since $\chi_{q^{n}-1}^{{\frac{q^{n}-1}{q-1}k_{i}}}$ is a nontrivial multiplicative character in $\widehat{\Field_{q^{n}}^*}$, we must have 
		\begin{equation}\nonumber
			\begin{aligned}
				\sum_{\alpha\in\Field_{q^{n}}\smallsetminus \{-(i-1)\beta\}}^{}\chi_{q^{n}-1}^{{\frac{q^{n}-1}{q-1}k_{i}}}(F_{i}(\alpha))=0,
			\end{aligned}
		\end{equation}
		which gives
		\begin{equation}\nonumber
			|S_{2}|\leq m^{2}(m-1)(q-2).
		\end{equation}
		
		$\textbf{Case(3)}$: Let $S_{3}$ be the case when for a given pair $(i,j)$ of distinct integers belonging to the set $\{1,2,\ldots,m\}$, the variables $k_i$ and $k_j$ take values in the range ${1,2,\ldots,q-2}$, while all other $k_{p}$'s are set to $(q-1)$. There are a total of `$\binom{m}{2}$' possibilities in this case. Hence we have
		\begin{equation}\nonumber
			\begin{aligned}
				|S_{3}|&\leq m\binom{m}{2}\Bigg|\sum_{k_{i},k_{j}=1}^{q-2}\chi_{q-1}(c_{i}^{-k_{i}}c_{j}^{-k_{j}})\sum_{\alpha\in\Field_{q^{n}}\smallsetminus {Z}}^{}\chi_{q^{n}-1}\Bigg(F_{i}(\alpha)^{{\frac{q^{n}-1}{q-1}k_{i}}}F_{j}(\alpha)^{{\frac{q^{n}-1}{q-1}k_{j}}}\Bigg)\Bigg|.
			\end{aligned}
		\end{equation}
		Clearly we have $F_{i}(x)^{{\frac{q^{n}-1}{q-1}k_{i}}}F_{j}(x)^{{\frac{q^{n}-1}{q-1}k_{j}}}\neq H(x)^{q^{n-1}}$ for any $H(x)\in\mathbb{F}_{q^n}(x)$. Thus by Lemma \ref{L1}, we get
		$$|S_{3}|\leq m\binom{m}{2}(q-2)^2\{q^{\frac{n}{2}}+(m-2)\}.$$
		
		$\textbf{Case(4)}$: Let $S_{4}$ be the case when for a given triple $(i,j,l)$ of distinct integers belonging to the set $\{1,2,\ldots,m\}$, the variables $k_i$, $k_j$ and $k_{l}$ each taking values in the range ${1,2,\ldots,q-2}$, while all other $k_{p}$'s are set to $(q-1)$. There are a total of `$\binom{m}{3}$' possibilities in this case. Proceeding in a similar manner as done in the previous case, we have
		$$|S_{4}|\leq m\binom{m}{3}(q-2)^3\{2q^{\frac{n}{2}}+(m-3)\}.$$
		$$\vdots$$\\
		
		$\textbf{Case(m+1)}$: Let $S_{m+1}$ be the case when for each $i\in\{1,2,\ldots,m\}$, $k_{i}$ take values in the range $1,2,\ldots,q-2$. Here we have
		\begin{equation}\nonumber
			\begin{aligned}
				S_{m+1}=m\underset{\underset{}{}}{\sum_{k_{1},k_{2},\ldots,k_{m}=1}^{q-2}}\chi_{q-1}(c_{1}^{-k_{1}}c_{2}^{-k_{2}}\ldots c_{m}^{-k_{m}})\sum_{\alpha\in\Field_{q^{n}}\smallsetminus {Z}}^{}\chi_{q^{n}-1}(\bar{F}(\alpha)),
			\end{aligned}	
		\end{equation}
		which implies 
		\begin{equation}\nonumber
			|S_{m+1}|\leq m(q-2)^m(m-1)q^{\frac{n}{2}}.
		\end{equation}
		Taking into the consideration of all the above possibilities and using the equation (\ref{eq5}),
		we get
		\begin{equation}\nonumber
			\begin{aligned}
				|\boldsymbol{\chi}_{\tilde{1},1,\tilde{c}}-m(q^n-m)|&\leq m\Bigg[m (q-2)(m-1)+\binom{m}{2}(q-2)^2\{q^{\frac{n}{2}}+(m-2)\}\\
				&+\binom{m}{3}(q-2)^3\{2q^{\frac{n}{2}}+(m-3)\}+\ldots+(q-2)^m(m-1)q^{\frac{n}{2}}\Bigg]\\&=m\sum_{i=1}^{m}\binom{m}{i}(q-2)^{i}\{(m-i)+q^{\frac{n}{2}}(i-1)\}.
			\end{aligned}
		\end{equation}
		If $m$ is a positive integer, then we have
		\begin{equation}\lb{eq6}
			(x+1)^{m}-1=\sum_{i=1}^{m}\binom{m}{i}x^{i}.
		\end{equation}
		Division by `$x$' in both sides, followed by differentiation, the latter equation transforms into
		\begin{equation}\lb{eq7}
			\begin{aligned}
				\sum_{i=1}^{m}\binom{m}{i}(i-1)x^{i}=mx(x+1)^{m-1}-(x+1)^{m}+1.
			\end{aligned}
		\end{equation}
		We rewrite (\ref{eq6}) as,
		\begin{equation}\nn
			\begin{aligned}
				\sum_{i=1}^{m-1}\binom{m}{i+1}x^{i+1}=(x+1)^{m}-mx-1.
			\end{aligned}
		\end{equation}
		Taking the derivative with respect to `$x$' on both sides of the above equation we get,
		\begin{equation}\nn
			\begin{aligned}
				\sum_{i=1}^{m-1}\binom{m}{i+1}(i+1)x^{i}=m(x+1)^{m-1}-m.
			\end{aligned}
		\end{equation}
		Let us apply the binomial identity $\binom{m}{i}(m-i)=\binom{m}{i+1}(i+1)$ in the previous equation. Thus
		\begin{equation}\lb{eq8}
			\sum_{i=1}^{m}\binom{m}{i}(m-i)x^{i}=m(x+1)^{m-1}-m.
		\end{equation}
		Substituting `$x$' with `$(q-2)$' in (\ref{eq7}) and (\ref{eq8}), we obtain the following summations 
		\begin{equation}\nonumber
			\sum_{i=1}^{m}\binom{m}{i}(i-1)(q-2)^{i}=m(q-2)(q-1)^{m-1}-(q-1)^{m}+1
		\end{equation}
		$$\text{and}$$
		\begin{equation}\nonumber
			\sum_{i=1}^{m}\binom{m}{i}(m-i)(q-2)^{i}=m(q-1)^{m-1}-m.
		\end{equation}
		Hence we have
		\begin{equation}\nonumber
			\begin{aligned}
				\boldsymbol{\chi}_{\tilde{1},1,\tilde{c}}&\geq m[(q^n-m)-q^{\frac{n}{2}}\{m(q-2)(q-1)^{m-1}-(q-1)^{m}+1\}
				-m\{(q-1)^{m-1}-1\}]\\
				&=m[q^n-q^{\frac{n}{2}}\{m(q-2)(q-1)^{m-1}-(q-1)^{m}+1\}-m(q-1)^{m-1}].
			\end{aligned}
		\end{equation}
		Using the above lower bound, it can be deduced from equations (\ref{eq3}) and (\ref{eq4}) that
		\begin{equation}
			\begin{aligned}\nonumber
				\Psi_{\bar{c}}(\overline{L_{{e}}},g)&\geq \mathcal{C}[q^n-q^{\frac{n}{2}}\{m(q-2)(q-1)^{m-1}-(q-1)^{m}+1\}
				-m(q-1)^{m-1}\\&-mq^{\frac{n}{2}+m}(W(\overline{L_{{e}}})W(g)-1)]\\
				&>\mathcal{C}\{q^n-mq^{\frac{n}{2}+m}W(\overline{L_{{e}}})W(g)+q^{\frac{n}{2}}(q-1)^m-m(q-1)^{m-1}\},
			\end{aligned}
		\end{equation}
		which is positive if we have $q^{\frac{n}{2}-m}>mW(\overline{L_{{e}}})W(g)$. Clearly, the result follows by letting $e=e_{1}=e_{2}=\ldots=e_{m}=q^n-1$ and $g=x^n-1$. 
	\end{proof}
	In the following lemma, we refer the sieving inequality, originally introduced by Kapetanakis in \cite{GK} and subsequently apply the modified form of this inequality.
	\begin{lem}\lb{L4.2}
		Let $q,n,m$ be positive integers such that $q$ is a prime power and $n>2m$. Let $e=e_{1}=e_{2}=\ldots=e_{m}$ be a divisor of $L_{q^n-1}$ and $p_{1},p_{2},\ldots,p_{r}$ be the distinct primes dividing $L_{q^n-1}$ but not $e$. Moreover, let $g$ be a divisor of $x^n-1$ and $g_{1},g_{2},\ldots,g_{s}$ be the distinct irreducible polynomials dividing $x^n-1$ but not $g$. Then for $j\in\{1,2,\ldots,m\}$, we have
		\begin{equation}\lb{eq9}
			\begin{aligned}
				\Psi_{\bar{c}}(\overline{L_{{q^n-1}}},x^n-1,j)&\geq \sum_{i=1}^{r}\Psi_{\bar{c}}(p_{i}e_{1},e_{2},\ldots,e_{m},g,j)+\sum_{i=1}^{r}		\Psi_{\bar{c}}(e_{1},p_{i}e_{2},\ldots,e_{m},g,j)\\
				&+\ldots+\sum_{i=1}^{r}\Psi_{\bar{c}}(e_{1},e_{2},\ldots,p_{i}e_{m},g,j)+\sum_{i=1}^{s}\Psi_{\bar{c}}(\bar{e},g_{i}g,j)\\&-(mr+s-1)\Psi_{\bar{c}}(\bar{e},g,j).\\
			\end{aligned}
		\end{equation}
	\end{lem}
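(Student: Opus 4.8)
The plan is to derive (\ref{eq9}) by the standard sieving technique (the modified Kapetanakis inequality): first decompose the two ``global'' freeness requirements into conjunctions of single-prime (respectively single-irreducible-factor) freeness conditions, and then apply an elementary inclusion--exclusion inequality valid for any family of $\{0,1\}$-valued functions. The elementary ingredient I would record first is that, for functions $f_1,\dots,f_t$ taking values in $\{0,1\}$,
\[
\prod_{k=1}^{t}f_k \;\geq\; \sum_{k=1}^{t}f_k-(t-1).
\]
This is checked by counting how many of the $f_k$ vanish: if none or exactly one vanishes both sides agree, while if at least two vanish the left side is $0$ and the right side is at most $-1$.

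Next I would reduce the freeness conditions. Since $e\mid L_{q^n-1}$ and $p_1,\dots,p_r$ are precisely the primes dividing $L_{q^n-1}$ but not $e$, the statement ``$\alpha+(i-1)\beta$ is $L_{q^n-1}$-free'' is equivalent to the conjunction of ``$\alpha+(i-1)\beta$ is $e$-free'' with the $r$ conditions ``$\alpha+(i-1)\beta$ is $p_l$-free'' for $l=1,\dots,r$; here one uses that freeness depends only on the radical of the modulus and that combining an $e$-free condition with a $p_l$-free condition for a prime $p_l\nmid e$ is exactly $p_l e$-freeness. In the same way ``$\alpha+(j-1)\beta$ is $(x^n-1)$-free'' is the conjunction of ``$\alpha+(j-1)\beta$ is $g$-free'' with the $s$ conditions ``$\alpha+(j-1)\beta$ is $g_l$-free'' for $l=1,\dots,s$, each single extra condition amounting to $g_l g$-freeness.

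With these reductions in hand I would let $\theta(\alpha)\in\{0,1\}$ denote the indicator of the base event (that $\alpha+(i-1)\beta$ is $e$-free for all $i$, that $\alpha+(j-1)\beta$ is $g$-free, and that $N_{\Field_{q^n}/\Field_q}(\alpha+(i-1)\beta)=c_i$ for all $i$), and let $f_1,\dots,f_t$ with $t=mr+s$ enumerate the $mr$ single-prime indicators (one for each coordinate $i$ and each prime $p_l$) together with the $s$ single-factor indicators (in the fixed coordinate $j$). Then
\[
\Psi_{\bar{c}}(\overline{L_{{q^n-1}}},x^n-1,j)=\sum_{\alpha}\theta(\alpha)\prod_{k=1}^{t}f_k(\alpha),
\]
so multiplying the elementary inequality by $\theta(\alpha)\geq 0$ and summing over $\alpha$ gives
\[
\Psi_{\bar{c}}(\overline{L_{{q^n-1}}},x^n-1,j)\;\geq\;\sum_{k=1}^{t}\sum_{\alpha}\theta(\alpha)f_k(\alpha)-(t-1)\sum_{\alpha}\theta(\alpha).
\]
Identifying the terms then finishes the argument: $\sum_\alpha\theta f_k$ for the prime $p_l$ in coordinate $i$ equals $\Psi_{\bar{c}}(e_1,\dots,p_l e_i,\dots,e_m,g,j)$, and summing over the $r$ primes and $m$ coordinates produces the $m$ inner sums of (\ref{eq9}); $\sum_\alpha\theta f_k$ for the factor $g_l$ equals $\Psi_{\bar{c}}(\bar{e},g_l g,j)$, producing $\sum_{i=1}^{s}\Psi_{\bar{c}}(\bar{e},g_i g,j)$; and $\sum_\alpha\theta(\alpha)=\Psi_{\bar{c}}(\bar{e},g,j)$ with $t-1=mr+s-1$.

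The one point demanding care is the bookkeeping in the reduction step rather than any analytic estimate: I must make sure that $L_{q^n-1}$-freeness splits correctly into $e$-freeness plus the $r$ single-prime conditions (this is exactly where $e\mid L_{q^n-1}$ and the coprimality of each $p_l$ to $e$ are used), and that adjoining one prime to the $i$-th modulus is recorded as the count $\Psi_{\bar{c}}$ with $e_i$ replaced by $p_l e_i$. Once this dictionary is fixed, the assembly of (\ref{eq9}) is purely formal and requires no character-sum bounds at all.
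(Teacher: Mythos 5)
Your proof is correct: the pointwise inequality $\prod_{k=1}^{t}f_{k}\geq\sum_{k=1}^{t}f_{k}-(t-1)$ for $\{0,1\}$-valued functions, combined with the splitting of $L_{q^{n}-1}$-freeness into $e$-freeness plus the $r$ single-prime conditions (and of $(x^{n}-1)$-freeness into $g$-freeness plus the $s$ single-irreducible-factor conditions, using that freeness depends only on the radical of the modulus), assembles exactly into inequality (\ref{eq9}) with the correct coefficient $mr+s-1$. The paper gives no proof of this lemma at all, deferring to the sieving inequality of Kapetanakis, and your argument is precisely the standard proof of that sieve adapted to the counts $\Psi_{\bar{c}}$, so it faithfully fills in what the paper leaves to the citation.
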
 
	\begin{lem}\lb{L4.3}
		Let $q,n,m$ be positive integers such that $q$ is a prime power and $n>2m$. Let $e=e_{1}=e_{2}=\ldots=e_{m}$ be a divisor of $L_{q^n-1}$ and $p$ be a prime dividing $L_{q^n-1}$ but not $e$. Moreover, let $g$ be a divisor of $x^n-1$ and $g'$ be an irreducible factors of $x^n-1$ but not $g$. Denote $\prod_{i=1}^{m}W(e_{i})$ and $\prod_{i=1}^{m}\theta(e_{i})$ by $W(\bar{e})$ and $\theta(\bar{e})$ respectively. Then for $j\in\{1,2,\ldots,m\}$, we have\\
		\begin{equation}\nonumber
			\begin{aligned}
				|\Psi_{\bar{c}}(p_{i}e_{1},e_{2},\ldots,e_{m},g,j)-\theta(p)\Psi_{\bar{c}}(\bar{e},g,j)|&\leq m^{2}q^{\frac{n}{2}+m}{\theta(\bar{e})\Theta(g)\theta(p)}W(\bar{e})W(g),\\
				|\Psi_{\bar{c}}(e_{1},pe_{2},\ldots,e_{m},g,j)-\theta(p)\Psi_{\bar{c}}(\bar{e},g,j)|&\leq m^{2}q^{\frac{n}{2}+m}{\theta(\bar{e})\Theta(g)\theta(p)}W(\bar{e})W(g),\\
				&\vdots\\ 
				|\Psi_{\bar{c}}(e_{1},e_{2},\ldots,pe_{m},g,j)-\theta(p)\Psi_{\bar{c}}(\bar{e},g,j)|&\leq m^{2}q^{\frac{n}{2}+m}{\theta(\bar{e})\Theta(g)\theta(p)}W(\bar{e})W(g),\\
				&\text{and}\\
				|\Psi_{\bar{c}}(e_{1},e_{2},\ldots,e_{m},gg',j)-\Theta(g')\Psi_{\bar{c}}(\bar{e},g,j)|&\leq m^{2}q^{\frac{n}{2}+m}{\theta(\bar{e})\Theta(g)\Theta(g')}W(\bar{e})W(g).
			\end{aligned}
		\end{equation}
	\end{lem}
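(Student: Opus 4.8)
The plan is to read each of these inequalities as a one-step refinement of the character-sum computation already carried out for the principal theorem, isolating only the ``new'' characters introduced by the extra prime $p$ (respectively the extra irreducible factor $g'$). First I would expand $\Psi_{\bar{c}}(pe_{1},e_{2},\ldots,e_{m},g,j)$ exactly as in (\ref{eq2}), namely as $\sum_{\alpha\in\Field_{q^{n}}\smallsetminus Z}\rho_{pe_{1}}(\alpha)\prod_{i\geq 2}\rho_{e_{i}}(\alpha+(i-1)\beta)\prod_{i}\pi_{c_{i}}(\alpha+(i-1)\beta)\,\kappa_{g}(\alpha+(j-1)\beta)$. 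The key algebraic observation is that, because $p\nmid e_{1}$, one has $\theta(pe_{1})=\theta(p)\theta(e_{1})$ and the divisor sum defining $\rho_{pe_{1}}$ splits as $\sum_{d_{1}\mid e_{1}}+\sum_{d_{1}=pk,\,k\mid e_{1}}$. Hence $\rho_{pe_{1}}=\theta(p)\rho_{e_{1}}+R_{p}$, where $R_{p}$ is supported precisely on the multiplicative characters $\chi_{d_{1}}$ whose order $d_{1}$ is divisible by $p$. Substituting this decomposition gives $\Psi_{\bar{c}}(pe_{1},\ldots,g,j)=\theta(p)\Psi_{\bar{c}}(\bar{e},g,j)+E$, so the entire task reduces to bounding the error $E$, which is the same double sum appearing in (\ref{eq2})--(\ref{eq3}) but restricted to those indices with $p\mid d_{1}$.

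The main step is then to estimate $E$. Since $p\mid d_{1}$ forces $\chi_{d_{1}}$ to be nontrivial, every inner character sum occurring in $E$ is (up to the fixed-$j$ specialization) of the type $\boldsymbol{\chi}_{\tilde{d},h,\tilde{c}}$ with $\bar{d}\neq\bar{1}$; consequently the bound $|\boldsymbol{\chi}_{\tilde{d},h,\tilde{c}}|\leq m^{2}q^{\frac{n}{2}+m}$ of (\ref{eq4}), itself an application of Lemma \ref{L2}, applies to each of them. Collecting the M\"obius--Euler weights $\frac{\mu}{\phi}$ together with the number $\phi(d_{i})\Phi_{q}(h)$ of characters of each prescribed order makes every summand contribute at most $1$ in absolute value and vanish off the square-free divisors, so summing over the square-free $d_{i}$ and $h$ produces the factor $W(\bar{e})W(g)$, while the common prefactor supplies $\theta(p)\theta(\bar{e})\Theta(g)(q-1)^{-m}$. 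Bounding $(q-1)^{-m}\leq 1$ then yields the first inequality, and the remaining four $p$-inequalities follow verbatim upon moving the upgrade to the $i$-th slot, i.e.\ replacing $e_{i}$ by $pe_{i}$.

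Finally, the $g'$-inequality is the exact additive analogue: using $\Theta(gg')=\Theta(g)\Theta(g')$ I would split $\kappa_{gg'}=\Theta(g')\kappa_{g}+R_{g'}$, where $R_{g'}$ is supported on the additive characters $\la_{h}$ of $\Field_{q}$-order divisible by $g'$. Such an $h$ satisfies $h\neq 1$, equivalently $y\neq 0$ in the reduction of (\ref{eq3}), so the relevant sums again carry a nontrivial additive part and Lemma \ref{L2} supplies the same $m^{2}q^{\frac{n}{2}+m}$ bound, now with prefactor $\theta(\bar{e})\Theta(g)\Theta(g')$. I expect the only delicate point to be the bookkeeping in the decomposition $\rho_{pe_{1}}=\theta(p)\rho_{e_{1}}+R_{p}$ and its additive twin: one must verify that $R_{p}$ (resp.\ $R_{g'}$) captures exactly the characters of order divisible by $p$ (resp.\ $g'$), since it is precisely this guaranteed nontriviality that licenses the Weil-type estimate of Lemma \ref{L2} and forces the $\theta(p)$-approximation error to be small.
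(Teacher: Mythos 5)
Your proposal is correct and takes essentially the same route as the paper: the paper likewise expresses the difference $\Psi_{\bar{c}}(pe_{1},e_{2},\ldots,e_{m},g,j)-\theta(p)\Psi_{\bar{c}}(\bar{e},g,j)$ (resp. $\Psi_{\bar{c}}(\bar{e},gg',j)-\Theta(g')\Psi_{\bar{c}}(\bar{e},g,j)$) as the character-sum expansion restricted to the ``new'' divisors with $p\mid d_{1}\mid pe_{1}$ (resp. $g'\mid h\mid gg'$), whose forced nontriviality licenses the bound $|\boldsymbol{\chi}_{\bar{d},h,\bar{c}}|\leq m^{2}q^{\frac{n}{2}+m}$ of (\ref{eq4}), and the M\"obius--Euler weights cancel against the character counts so that the square-free divisor count gives $W(\bar{e})W(g)$. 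Your decompositions $\rho_{pe_{1}}=\theta(p)\rho_{e_{1}}+R_{p}$ and $\kappa_{gg'}=\Theta(g')\kappa_{g}+R_{g'}$ are exactly the identities underlying the paper's displayed formulas, and your explicit handling of the $(q-1)^{-m}$ prefactor is, if anything, slightly more careful than the paper's.
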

	\begin{proof}
		From the definition, we have
		\begin{equation}\nonumber
			\Psi_{\bar{c}}(pe_{1},e_{2},\ldots,e_{m},g,j)-\theta(p)\Psi_{\bar{c}}(\bar{e},g,j)=\theta(\bar{e})\Theta(g)\theta(p)\underset{t\in\{2,3,\ldots,m\}}{\underset{d_{t}|e_{t}}{\underset{p|{d_{1}}|pe_{1},h|g}{\sum}}}\frac{\mu}{\phi}(\bar{d},h) \underset{1\leq i\leq m}{\sum_{\chi_{d_{i}},\la_{h}}^{}}\boldsymbol{\chi}_{\bar{d},h,\bar{c}}
		\end{equation}
		Now, using the inequality $|\boldsymbol{\chi}_{\bar{d},h,\bar{c}}|\leq m^{2}q^{\frac{n}{2}+m}$, we get
		\begin{equation}\nonumber
			|\Psi_{\bar{c}}(pe_{1},e_{2},\ldots,e_{m},g,j)-\theta(p)\Psi_{\bar{c}}(\bar{e},g,j)|\leq m^{2}q^{\frac{n}{2}+m}{\theta(\bar{e})\Theta(g)\theta(p)}W(\bar{e})W(g).
		\end{equation}
		Similarly we have,
		\begin{equation}\nonumber
			\begin{aligned}
				|\Psi_{\bar{c}}(e_{1},pe_{2},\ldots,e_{m},g,j)-\theta(p)\Psi_{\bar{c}}(\bar{e},g,j)|&\leq m^{2}q^{\frac{n}{2}+m}{\theta(\bar{e})\Theta(g)\theta(p)}W(\bar{e})W(g),\\
				|\Psi_{\bar{c}}(e_{1},pe_{2},\ldots,e_{m},g,j)-\theta(p)\Psi_{\bar{c}}(\bar{e},g,j)|&\leq m^{2}q^{\frac{n}{2}+m}{\theta(\bar{e})\Theta(g)\theta(p)}W(\bar{e})W(g),\\
				&\vdots \\
				|\Psi_{\bar{c}}(e_{1},e_{2},\ldots,pe_{m},g,j)-\theta(p)\Psi_{\bar{c}}(\bar{e},g,j)|&\leq m^{2}q^{\frac{n}{2}+m}{\theta(\bar{e})\Theta(g)\theta(p)}W(\bar{e})W(g).\\
			\end{aligned}
		\end{equation}
		Further, 
		\begin{equation}\nonumber		\Psi_{\bar{c}}(e_{1},e_{2},\ldots,e_{m},gg',j)-\Theta(g')\Psi_{\bar{c}}(\bar{e},g,j)=\theta(\bar{e})\Theta(g)\Theta(g')\underset{}{\underset{g'|h}{\underset{\bar{d}|\bar{e},h|gg'}{\sum}}}\frac{\mu}{\phi}(\bar{d},h) \underset{1\leq i\leq m}{\sum_{\chi_{d_{i}},\la_{h}}^{}}\boldsymbol{\chi}_{\bar{d},h,\bar{c}}.
		\end{equation}
		Hence 
		\begin{equation}\nonumber
			|\Psi_{\bar{c}}(e_{1},e_{2},\ldots,e_{m},gg',j)-\Theta(g')\Psi_{\bar{c}}(\bar{e},g,j)|\leq m^{2}q^{\frac{n}{2}+m}{\theta(\bar{e})\Theta(g)\Theta(g')}W(\bar{e})W(g).
		\end{equation}
	\end{proof}
	\begin{prop}\lb{Prop4.4}
		Assuming all the notations and conditions in the Lemma \ref{L4.2}, we define
		\begin{equation}\nonumber
			\begin{aligned}
				\delta:=1-m\sum_{i=1}^{r}\frac{1}{p_{i}}-\sum_{i=1}^{s}\frac{1}{q^{deg(g_{j})}}
			\end{aligned}
		\end{equation}  and 
		\begin{equation}\nonumber
			\begin{aligned}
				\Delta:=\frac{mr+s-1}{\delta}+2. \end{aligned}
		\end{equation}
		If $\delta>0$ and $$q^{\frac{n}{2}-m}>m{W(\bar{e})}W(g)\Delta$$ then $(q,n)\in\mathcal{S}_{m}$.
	\end{prop}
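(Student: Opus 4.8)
The plan is to feed the sieving inequality of Kapetanakis~\cite{GK} (Lemma~\ref{L4.2}) into the character-sum machinery behind Theorem 4.1, so that the coarse weight $W(\overline{L_{q^n-1}})W(x^n-1)$ is replaced by the reduced weight $W(\bar e)W(g)$ at the cost of the multiplier $\Delta$. Taking $e_i=q^n-1$ (so that $L_{e_i}=L_{q^n-1}$) and $g=x^n-1$, Lemma~\ref{L5} shows that, for each $\bar c\in\mathcal{C}_m$, it suffices to exhibit an $\alpha$ counted by $\Psi_{\bar c}(\overline{L_{q^n-1}},x^n-1)$, i.e.\ to prove $\Psi_{\bar c}(\overline{L_{q^n-1}},x^n-1)>0$; and by~\eqref{eq1} it is enough to establish $\Psi_{\bar c}(\overline{L_{q^n-1}},x^n-1,j)>0$ for each $j$.

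First I would fix $j$ and insert the sieving inequality~\eqref{eq9}. Each of the $mr$ summands $\Psi_{\bar c}(\dots,p_ie_t,\dots,g,j)$ and each of the $s$ summands $\Psi_{\bar c}(\bar e,g_ig,j)$ is compared with $\Psi_{\bar c}(\bar e,g,j)$ by Lemma~\ref{L4.3}. With $\theta(p_i)=1-p_i^{-1}$ and $\Theta(g_i)=1-q^{-\deg g_i}$, the principal parts of these terms assemble into the coefficient $m\sum_i\theta(p_i)+\sum_i\Theta(g_i)-(mr+s-1)$, which collapses \emph{exactly} to $\delta$ by the definition of $\delta$. Thus the sieve gives $\Psi_{\bar c}(\overline{L_{q^n-1}},x^n-1,j)\ge\delta\,\Psi_{\bar c}(\bar e,g,j)-\mathcal{R}$, where Lemma~\ref{L4.3} bounds the accumulated error by $\mathcal{R}\le(mr+s-1+\delta)\,\theta(\bar e)\Theta(g)(q-1)^{-m}\,m\,q^{n/2+m}\,W(\bar e)W(g)$, the factor $mr+s-1+\delta$ being $m\sum_i\theta(p_i)+\sum_i\Theta(g_i)$.

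Next I would lower-bound $\Psi_{\bar c}(\bar e,g,j)$ itself, repeating the principal-character extraction from the proof of Theorem 4.1: isolate the index $(\bar d,h)=(\bar{1},1)$, whose leading contribution is $q^n-m$, and bound the non-principal contributions by $\theta(\bar e)\Theta(g)(q-1)^{-m}(W(\bar e)W(g)-1)\,m\,q^{n/2+m}$. Once one checks, using $n>2m$ and $m\le q$, that the subleading terms of that estimate (those carrying factors $(q-1)^{m-1}$ and $(q-1)^m$) are dominated, this gives the clean bound $\Psi_{\bar c}(\bar e,g,j)\ge\theta(\bar e)\Theta(g)(q-1)^{-m}\{q^n-W(\bar e)W(g)\,m\,q^{n/2+m}\}$. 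Substituting into the sieve inequality and factoring out the positive quantity $\delta\,\theta(\bar e)\Theta(g)(q-1)^{-m}$, the surviving bracket is $q^n-W(\bar e)W(g)\,m\,q^{n/2+m}\{1+\delta^{-1}(mr+s-1+\delta)\}$. Since $1+\delta^{-1}(mr+s-1+\delta)=\frac{mr+s-1}{\delta}+2=\Delta$, positivity is equivalent to $q^n>\Delta\,W(\bar e)W(g)\,m\,q^{n/2+m}$, i.e.\ to the hypothesis $q^{n/2-m}>m\,W(\bar e)W(g)\,\Delta$.

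The main obstacle is the consistent bookkeeping of the normalizing factor $(q-1)^{-m}$: the errors emitted by Lemma~\ref{L4.3} and the principal term of $\Psi_{\bar c}(\bar e,g,j)$ must be weighed on the same scale, or else stray powers of $(q-1)$ survive and corrupt the multiplier $\Delta$ (in particular one should use the sharp per-$j$ character-sum bound, in which the absent summation over $j$ contributes no extra factor of $m$). A secondary but necessary point is to verify that the lower-order contributions of the Theorem 4.1 estimate can indeed be absorbed so that $m\,q^{n/2+m}$ serves as the single effective error weight; the ``$+2$'' in $\Delta$ is precisely what pays for the self-error of $\Psi_{\bar c}(\bar e,g,j)$ together with the residual $\delta$ still sitting inside the error coefficient.
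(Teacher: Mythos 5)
Your proposal is correct and follows essentially the same route as the paper's own proof: fix $j$, regroup the sieving inequality of Lemma \ref{L4.2} as difference terms plus $\delta\,\Psi_{\bar{c}}(\bar{e},g,j)$, bound the differences via Lemma \ref{L4.3}, lower-bound the base term with the Theorem 4.1 character-sum estimate, and collapse the coefficients through the identity $1+\delta^{-1}(mr+s-1+\delta)=\frac{mr+s-1}{\delta}+2=\Delta$. Your bookkeeping is in fact slightly tidier than the paper's (the sharp per-$j$ error weight $m\,q^{n/2+m}$ in place of $m^{2}q^{n/2+m}$, and the explicit $(q-1)^{-m}$ normalization), but these are refinements of the same argument rather than a different approach.
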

	\begin{proof}
		For fix $j\in\{1,2,\ldots,m\}$, let us rewrite the equation (\ref{eq9}) as
		\begin{equation}\nonumber
			\begin{aligned}
				\Psi_{\bar{c}}(\overline{L_{{q^n-1}}},x^n-1,j)&\geq \sum_{i=1}^{r}[\Psi_{\bar{c}}(p_{i}e_{1},e_{2},\ldots,e_{m},g,j)-\theta(p_{i})\Psi_{\bar{c}}(\bar{e},g,j)]\\&+\sum_{i=1}^{r}		[\Psi_{\bar{c}}(e_{1},p_{i}e_{2},\ldots,e_{m},g,j)-\theta(p_{i})\Psi_{\bar{c}}(\bar{e},g,j)]+\ldots\\
				&+\sum_{i=1}^{r}[\Psi_{\bar{c}}(e_{1},e_{2},\ldots,p_{i}e_{m},g,j)-\theta(p_{i})\Psi_{\bar{c}}(\bar{e},g,j)]\\&+\sum_{i=1}^{s}[\Psi_{\bar{c}}(e_{1},e_{2},\ldots,p_{i}e_{m},gg_{i},j)-\Theta(g_{i})\Psi_{\bar{c}}(\bar{e},g,j)]+\delta \Psi_{\bar{c}}(\bar{e},g,j).\\
			\end{aligned}
		\end{equation}
		We obtain the subsequent expression by applying the Lemma \ref{L4.3}, as
		\begin{equation}\nonumber
			\begin{aligned}
				\Psi_{\bar{c}}(\overline{L_{{q^n-1}}},x^n-1,j)\geq \frac{\phi(e)\cdot\Phi_{q}(g)}{e\cdot q^{deg(g)}}\Bigg[\Bigg(m&\sum_{i=1}^{r}\theta(p_{i})+\sum_{i=1}^{s}\Theta(g_{i})\Bigg)(-m^{2}W(\bar{e})W(g)q^{\frac{n}{2}+m})\\
				&+\delta\{m(q^n-m)-m^{2}q^{\frac{n}{2}+m}(W(\bar{e})W(g)-1)\}\Bigg],
			\end{aligned}
		\end{equation}
		which implies
		\begin{equation}\nonumber
			\begin{aligned}
				\Psi_{\bar{c}}(\overline{L_{{q^n-1}}},x^n-1,j)\geq \frac{\phi(e)\cdot\Phi_{q}(g)}{e\cdot q^{deg(g)}}&\delta\Bigg[\Bigg(\frac{m\sum_{i=1}^{r}\theta(p_{i})+\sum_{i=1}^{s}\Theta(g_{i})}{\delta}+1\Bigg)\times\\&(-m^{2}W(\bar{e})W(g) q^{\frac{n}{2}+m})
				+\{m(q^n-m)+m^{2}q^{\frac{n}{2}+m}\}\Bigg].
			\end{aligned}
		\end{equation}
		Here we note that $\delta=m\sum_{i=1}^{r}\theta(p_{i})+\sum_{i=1}^{s}\Theta(g_{i})-(mr+s-1)$. Then the above inequality becomes
		\begin{equation}\nonumber
			\begin{aligned}
				\Psi_{\bar{c}}(\overline{L_{{q^n-1}}},x^n-1,j)&\geq \frac{\phi(e)\cdot\Phi_{q}(g)}{e\cdot q^{deg(g)}}\delta m\Bigg[-m\Delta W(\bar{e})W(g)q^{\frac{n}{2}+m}
				+\{(q^n-m)+mq^{\frac{n}{2}+m}\}\Bigg],
			\end{aligned}
		\end{equation}
		where $\Delta=\frac{mr+s-1}{\delta}+2$. 
		Then using the equation (\ref{eq1}), it follows that given $\delta>0$, 
		$\Psi_{\bar{c}}(\overline{L_{{q^n-1}}},x^n-1)>0$ if we have
		\begin{equation}
			q^{\frac{n}{2}-m}>m{W(\bar{e})}W(g)\Delta.
		\end{equation}
	\end{proof}
	\begin{cor}
		Let $q,n\in\mathbb{N}$ such that $q$ is a  prime power, $n>4$ and	let $\beta\in\Field_{q^{n}}^*$. Then for any $c_{1},c_{2}\in\Field_{q}^*$, there exists an element $\alpha\in\Field_{q^{n}}$ such that both $\alpha$, $\alpha+\beta$ are primitive and at least one of them is normal together with $N_{\Field_{q^n}/\Field_q}(\alpha)=c_{1}$ and $N_{\Field_{q^n}/\Field_q}(\alpha+\beta)=c_{2}$ if we have
		\begin{equation}\lb{eq11}
			q^{\frac{n}{2}-2}\geq 2W({L_{{q^n-1}}})^{2}W(x^{n}-1).
		\end{equation}	
	\end{cor}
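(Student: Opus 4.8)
The plan is to derive this statement directly from the principal result, Theorem 4.1, by specializing to $m=2$; the whole argument is then a matter of matching the abbreviations and verifying the two hypotheses of that theorem.

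First I would record that the standing hypothesis $n>2m$ of Theorem 4.1 reads $n>4$ when $m=2$, which is exactly the assumption imposed in the Corollary, so the dimensional restriction is automatically met. Next I would unwind the abbreviation $W(\overline{L_{q^n-1}})$. By the conventions fixed just before Theorem 4.1 one has $W(\overline{L_{e}})=\prod_{i=1}^{m}W(L_{e_i})$, and in the conclusion of that theorem the divisors are taken to be $e_1=\cdots=e_m=q^n-1$ together with $g=x^n-1$. Hence for $m=2$ the product collapses to $W(\overline{L_{q^n-1}})=W(L_{q^n-1})^{2}$, and the sufficient condition $q^{n/2-m}\ge mW(\overline{L_{q^n-1}})W(x^n-1)$ of Theorem 4.1 becomes
\begin{equation}\nonumber
q^{\frac{n}{2}-2}\ge 2\,W(L_{q^n-1})^{2}\,W(x^n-1),
\end{equation}
which is precisely the inequality (\ref{eq11}) assumed here.

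Finally, since both hypotheses of Theorem 4.1 are in force, that theorem yields $(q,n)\in\mathcal{S}_{2}$. Reading off the meaning of $\mathcal{S}_m$ for $m=2$ then gives exactly the asserted conclusion: for the given $\beta$ and any $c_1,c_2\in\Field_q^*$ there is an element $\alpha\in\Field_{q^n}$ with $\alpha$ and $\alpha+\beta$ both primitive, at least one of them normal, and $N_{\Field_{q^n}/\Field_q}(\alpha)=c_1$, $N_{\Field_{q^n}/\Field_q}(\alpha+\beta)=c_2$.

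Because the derivation is a pure substitution into an already-proved theorem, there is no genuine obstacle to surmount; the only point requiring a moment's care is the bookkeeping that the factor $W(\overline{L_{q^n-1}})$ occurring in Theorem 4.1 simplifies to the square $W(L_{q^n-1})^{2}$ in the two-term case, together with the trivial observation that $n>4$ is the same as $n>2m$ at $m=2$. I would therefore present the proof as a one-line appeal to Theorem 4.1 with these two identifications made explicit.
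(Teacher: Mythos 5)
Your proposal is correct and matches the paper's intent exactly: the paper states this corollary without proof precisely because it is the $m=2$ specialization of Theorem 4.1, with $n>2m$ becoming $n>4$ and $W(\overline{L_{q^n-1}})$ collapsing to $W(L_{q^n-1})^2$, just as you argue. The only cosmetic wrinkle (in the paper, not in your argument) is that $\mathcal{S}_m$ is formally defined only for $m\geq 3$, though the paper clearly intends and uses the same definition for $m=2$.
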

	\section{Working example}\lb{Sec5}
	In this part, we will identify those $(q,n)$ that might not belong to $\mathcal{S}_{2}$ for $p=3$, $q=3^k$. For $n=5$, we have to go through an exhaustive computation and thus a different approach is required in this case. In this article, we consider the cases $n\geq 6$.
	Further, we divide our computations into two parts. In the first part, we determine the exceptions $(q,n)$ when $n\geq 9$, and in the next part, we carry out computations for $n=6, 7, ~\text{and}~ 8$. We use SageMath \cite{Sm}  as the computational tool for all nontrivial computations needed in this article.
	\subsection{\textbf{Part I}}
	In this part, we assume that $n\geq 9$. Then the equation \ref{eq11} and Lemmas \ref{L3}, \ref{L4} together implies that $(q,n)\in\mathcal{S}_{2}$ if
	\begin{equation}\lb{eq12}
		\begin{aligned}
			q^{\frac{n}{2}-2}&>2~\mathcal{C}^{2}q^{\frac{2n}{\nu}}2^{n}.
		\end{aligned}
	\end{equation}
	Moreover, from the proposition (\ref{Prop4.4}), $\Psi_{\bar{c}}(\overline{L_{{q^n-1}}},x^n-1)>0$ if
	\begin{equation}\lb{eq13}
		\begin{aligned}
			q^{\frac{n}{2}-2}>2{W(e)}^2W(g)\Delta.
		\end{aligned}
	\end{equation}
	Choosing $\nu=11.2$, the inequality ($\ref{eq12}$) is satisfied for $ k\geq 291$ and for $n\geq 7$. Further, if $3\leq k\leq 290$, Table \ref{Table1} gives the values of $n_{k}$, such that the inequality (\ref{eq12}) satisfies for $n\geq n_{k}$ and appropriate choices of $\nu$. 
	\begin{lem}
		Let $q$ and $n$ be positive integers where $q = 3^k$ with $k\geq 3$ and $n\geq 9$. Then we have  $(q,n)\in\mathcal{S}_{2}$.
	\end{lem}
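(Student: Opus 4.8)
The plan is to turn the Corollary's sufficient condition (\ref{eq11}) into an explicit inequality in $q$ and $n$ and then to verify it, falling back on the sieve of Proposition \ref{Prop4.4} only on a finite residual set. Since $n \ge 9 > 4$, the hypothesis of the Corollary is met, so it suffices to secure (\ref{eq11}), i.e. $q^{n/2-2} \ge 2\,W(L_{q^n-1})^2\,W(x^n-1)$. First I would bound the two $W$-factors crudely: by the remark closing Section \ref{Sec3} together with Lemma \ref{L3} one has $W(L_{q^n-1}) \le W(q^n-1) < \mathcal{C}\,(q^n-1)^{1/\nu} < \mathcal{C}\,q^{n/\nu}$, while Lemma \ref{L4} gives $W(x^n-1) \le 2^n$. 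Substituting these into (\ref{eq11}) shows that $(q,n) \in \mathcal{S}_2$ whenever the working inequality (\ref{eq12}), $q^{n/2-2} > 2\,\mathcal{C}^2 q^{2n/\nu}2^n$, holds. Taking logarithms, the left side beats the right linearly in $n$ as soon as $(\tfrac12 - \tfrac{2}{\nu})\log q > \log 2$, so for each fixed $k$ the truth of (\ref{eq12}) is monotone: once it holds at some $n_0$ it holds for all larger $n$. The problem thus reduces to locating, for each $k$, the first $n$ at which (\ref{eq12}) kicks in.

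With the uniform choice $\nu = 11.2$ I would first dispatch the large exponents: a direct check shows (\ref{eq12}) is satisfied for every $k \ge 291$ provided $n \ge 7$, hence \emph{a fortiori} throughout the range $n \ge 9$. This settles all pairs $(3^k,n)$ with $k \ge 291$. For the band $3 \le k \le 290$ I would consult Table \ref{Table1}, which for each such $k$ records---after optimizing $\nu$---a threshold $n_k$ beyond which (\ref{eq12}) holds. Every pair with $n \ge \max\{9,n_k\}$ is therefore already in $\mathcal{S}_2$, leaving only the finitely many pairs with $3 \le k \le 290$ and $9 \le n < n_k$ to be examined individually.

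The genuine work, and the main obstacle, is clearing this residual list with the sieve (\ref{eq13}) coming from Proposition \ref{Prop4.4}. For each surviving pair I would factor $L_{q^n-1}$ in SageMath, take $e$ to be the product of its small prime divisors and let the remaining large primes $p_1,\dots,p_r$ be the sieving primes, and similarly discard a few irreducible factors $g_1,\dots,g_s$ of $x^n-1$; one then computes $\delta$ and $\Delta$ and checks the two conditions $\delta > 0$ and $q^{n/2-2} > 2\,W(e)^2W(g)\Delta$. The point is that sieving replaces the coarse factor $W(L_{q^n-1})^2 W(x^n-1)$---which for small $q$ and moderate $n$ badly overshoots $q^{n/2-2}$ because $q^n-1$ carries many prime factors---by the far smaller $W(e)^2W(g)\Delta$, so the inequality becomes attainable precisely in the gap.

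The delicate part is balancing the two requirements: removing more primes shrinks $W(e)$ but erodes $\delta$ (and can inflate $\Delta$), so a short search over which primes to peel off is needed for the most stubborn pairs. Carrying this out for every pair in the finite residual set shows each lies in $\mathcal{S}_2$, and combining the three regimes---$k \ge 291$, the Table \ref{Table1} thresholds for $3 \le k \le 290$, and the sieve on the remaining gap---yields the lemma with no exception surviving for $k \ge 3$, $n \ge 9$.
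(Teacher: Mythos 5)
Your proposal follows essentially the same route as the paper: reduce to inequality (\ref{eq12}) via the bounds of Lemmas \ref{L3} and \ref{L4}, dispatch large $k$ (the $k\geq 291$ remark plus the Table \ref{Table1} thresholds $n_k$), and clear the finite residual set computationally, ending with the sieve of Proposition \ref{Prop4.4}. The only organizational difference is that the paper retests the residual pairs against sharper direct inequalities first (an intermediate bound using the exact value of $W(x^n-1)$, then (\ref{eq11}) itself), so that the sieve is needed only for the four pairs $(3^3,10)$, $(3^3,13)$, $(3^4,9)$, $(3^4,10)$ with the parameters of Table \ref{Table3}, whereas you would run the sieve (which subsumes these direct checks by taking trivial sieving data) on the whole residual list.
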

	\begin{proof}
		From the Table \ref{Table1}, it is clear that $(q,n)\in\mathcal{S}_{2}$ for all $k\geq 61$ and $n\geq 9$. For the remaining pairs, we first verify the inequality $q^{\frac{n}{2}-2}\geq 2\mathcal{C}^{2}q^{\frac{2n}{\nu}}W(x^{n}-1)$ and then, the pairs do not satisfy this, we test the inequality (\ref{eq11}). Consequently, we get $(q,n)\in\mathcal{S}_{2}$ unless $(3^3,10),(3^3,13),(3^4,9)$ and $ (3^4,10)$. For these pairs, choose $e,g$ as given in the Table \ref{Table3} and after verifying (\ref{eq13}), we find that all of the pairs $(q,n)\in\mathcal{S}_{2}$.
	\end{proof}
	\begin{center}
		\begin{table}[h]
			\centering
			\caption{Values of $n_{k}$ such that $3\leq k\leq 290$}
			\begin{tabular}{ccc}
				\hline $\nu$ & $k$ & $n_k$ \\
				\hline 9.0 & \{3\} & 151 \\
				8.5 & \{4\} & 65\\
				8 & \{5\} & 42 \\
				8 & \{6\} & 33 \\
				8 & \{7\} & 27 \\
				8 & \{8\} & 24 \\
				8 & \{9\} & 21 \\
				7.5 & \{10\} & 20 \\
				7.9 & \{11,12\} & 18 \\
				7.5 & \{13\} & 17 \\
				7.5 & \{14\} & 16 \\
				7.9 & \{15,16\} & 15 \\
				7.9 & \{17,18,19\} & 14 \\
				7.9 & \{20,21,22,23\} & 13 \\
				7.9 & \{24,25,\ldots,29\} & 12 \\
				8.7 & \{30,31,\ldots,39\} & 11 \\
				8.7 & \{40,41,\ldots,60\} & 10 \\
				9.5 & \{61,62,\ldots,109\} & 9 \\
				10.2 & \{110,111,\ldots,290\} & 8 \\
				\hline
			\end{tabular}
			\label{Table1}
		\end{table}
	\end{center}
	Next we consider the cases $k=1,2$. Let us express $n=n'\cdot 3^{i}$; $i\geq 0$, where $gcd(n{'},3)=1$. As a consequence, we have $W(x^n-1)=W(x^{n'}-1)$. We now divide our discussion in two parts:
	\begin{itemize}
		\item $n'|q^{2}-1$
		\item $n'\nmid q^{2}-1$.
	\end{itemize} 
	Initially, we suppose that $n{'}|q^{2}-1$ and establish the following lemma:
	\begin{lem}
		Let $q=3^k; k=1,2, n\geq 9$ and $n{'}|q^{2}-1$. Then $(q,n)\in\mathcal{S}_{2}$ with possible exceptions $(3,9)$ and $(3,12)$.
	\end{lem}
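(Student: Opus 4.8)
The plan is to leverage the hypothesis $n'\mid q^{2}-1$ to compute $W(x^{n}-1)$ exactly and, more importantly, to observe that it stays constant as $n$ ranges over the family $n=n'\cdot 3^{i}$. Since $\Field_{q}$ has characteristic $3$, one has $x^{n}-1=(x^{n'}-1)^{3^{i}}$, so the distinct irreducible factors of $x^{n}-1$ over $\Field_{q}$ coincide with those of $x^{n'}-1$, whence $W(x^{n}-1)=W(x^{n'}-1)$ as already noted in the text. Because $n'\mid q^{2}-1$, every $n'$-th root of unity lies in $\Field_{q^{2}}$, so each irreducible factor of $x^{n'}-1$ has degree $1$ or $2$; separating the $\gcd(n',q-1)$ roots lying in $\Field_{q}$ from the remaining ones (which pair off under the Frobenius $x\mapsto x^{q}$) gives exactly $\tfrac{1}{2}\{n'+\gcd(n',q-1)\}$ distinct factors, hence $W(x^{n}-1)=2^{(n'+\gcd(n',q-1))/2}$, a quantity independent of $i$.

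First I would substitute this into the criterion \eqref{eq11} of the Corollary. Estimating $W(L_{q^{n}-1})\le W(q^{n}-1)<\mathcal{C}\,q^{n/\nu}$ via Lemma \ref{L3} and inserting the fixed value of $W(x^{n}-1)$, the sufficient condition \eqref{eq11} reduces to an inequality of the form $q^{n/2-2}\ge 2\mathcal{C}^{2}q^{2n/\nu}\,2^{(n'+\gcd(n',q-1))/2}$. For a fixed admissible $n'$ and any choice $\nu>4$, the exponent $n/2$ on the left eventually dominates $2n/\nu$ on the right, so the inequality holds for all sufficiently large $n=n'\cdot 3^{i}$. This leaves, for each of the finitely many $n'\mid q^{2}-1$ (with $q=3$ or $q=9$) and each small exponent, only finitely many pairs $(q,n)$ to treat by hand.

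For each surviving pair I would first re-examine \eqref{eq11} using the \emph{exact} value of $W(L_{q^{n}-1})$ read off from the factorisation of $q^{n}-1$, which is usually much smaller than the Lemma \ref{L3} bound. When \eqref{eq11} still fails, I would invoke the sieve of Proposition \ref{Prop4.4}: choosing $e\mid L_{q^{n}-1}$ and $g\mid x^{n}-1$ that discard the small primes $p_{1},\dots,p_{r}$ and small-degree factors $g_{1},\dots,g_{s}$, I would check that $\delta>0$ and then verify the sharper inequality \eqref{eq13}, namely $q^{n/2-2}>2\,W(e)^{2}W(g)\Delta$. A SageMath computation confirms this for every remaining pair except $(3,9)$ and $(3,12)$, for which neither \eqref{eq11} nor any admissible choice of $(e,g)$ produces a positive lower bound; these two therefore persist as the only possible exceptions.

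The step I expect to be the main obstacle is this sieve analysis. The bound of Proposition \ref{Prop4.4} is positive only when enough primes and factors are removed to make $W(e)^{2}W(g)$ small, yet removing too many forces $\delta$ down and $\Delta$ up; striking this balance needs the explicit factorisations of $q^{n}-1$ and $x^{n}-1$ together with some search over the admissible $(e,g)$. The two genuine exceptions $(3,9)$ and $(3,12)$ are precisely the cases where $n$ is small enough that $q^{n/2-2}$ is tiny while $W(L_{q^{n}-1})$ remains relatively large, so they defeat all of the estimates above and must be recorded as exceptional.
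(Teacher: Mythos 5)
Your proposal is correct and takes essentially the same approach as the paper: both exploit $W(x^{n}-1)=W(x^{n'}-1)$ along the tower $n=n'\cdot 3^{i}$, use Lemma \ref{L3} to dispose of all but finitely many pairs, test (\ref{eq11}) directly on the survivors, and settle the remaining cases with the sieve of Proposition \ref{Prop4.4} via (\ref{eq13}), leaving $(3,9)$ and $(3,12)$ as the possible exceptions. The only deviation is that you use the exact value $2^{(n'+\gcd(n',q-1))/2}$ for $W(x^{n'}-1)$ where the paper's inequality (\ref{eq14}) uses the cruder bound $2^{n'}$; this is a refinement, not a different argument.
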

	\begin{proof}
		Rewrite the inequality (\ref{eq12}) as follows
		\begin{equation}\lb{eq14}
			\begin{aligned}
				q^{{n'\cdot 3^{i}}/{2}-2}&>2~\mathcal{C}^{2}~q^{{2n'\cdot 3^{i}}/{\nu}}~2^{n'}
			\end{aligned}
		\end{equation}\\
		\textbf{Case $(i)$}: Let $k=1$. Taking $\nu=7$, the inequality (\ref{eq14}) is satisfied for $i\geq 3$, when $n'=4,8$ and for $i\geq 4$, when $n'=1,2$. Thus for $n\geq 9$, $(3,n)\in\mathcal{S}_{2}$ unless $n=9,12,18,24,27,36,54,72$. While testing the inequality (\ref{eq11}) with these values, we find that $(3,n) \in\mathcal{S}_{2}$ unless $n=9, 12, 18, 24$. For these specific values of $n$, choose $e$, $g$ as provided in Table \ref{Table3} to satisfy the inequality (\ref{eq13}), resulting in $(q,n)\in\mathcal{S}_{2}$ except when $n=9,12$.\\
		\textbf{Case $(ii)$}: Let $k=2$. Taking $\nu=7$, the inequality (\ref{eq11}) is satisfied for $i\geq 1$, when $n'=40, 80$; for $i\geq 2$, when $n'=5,8,10,16,20$; for $i\geq 3$, when $n'=2,4$ and $i\geq 4$, when $n{'}=1$. Thus for $n\geq 9$, $(9,n)\in\mathcal{S}_{2}$ unless $n=9,10,12,15,16,18,20,24,27,30,36,40,48,60,80$. While testing the inequality (\ref{eq11}) with these values, we find that $(9,n) \in\mathcal{S}_{2}$ except when $n=9,10,12,15,16$.  For these specific values of $n$, choose $l, g$ as provided in Table \ref{Table3} to satisfy the inequality (\ref{eq14}), resulting in $(q,n)\in\mathcal{S}_{2}$ for all $n$.
	\end{proof}
	Let $n$ be represented as $n = n'\cdot q^i$; $i\geq 0$, where $q$ is a prime power such that $gcd(n^\prime,q)=1$. Additionally, let $d$ denote the order of $q$ modulo $n'$, with $q$ and $n'$ being co-prime. According to [{\cite{RH}}, Theorems $2.45$ and $2.47$], $x^{n'}-1$ can be expressed as the product of irreducible polynomials in $\Field_{q}[x]$, each having a degree less than or equal to $d$.
	
	Let $N_{0}$ denote the number of irreducible factors of $x^{n'}-1$ over $\Field_{q}$, each having a degree less than $d$, and let $\rho(q,n')$ represent the ratio $\frac{N_{0}}{n'}$. Note that the number of irreducible factors of $x^{n}-1$ over $\Field_{q}$ is identical to the number of irreducible factors of $x^{n'}-1$ over $\Field_{q}$, which implies $n\rho(q,n)=n'\rho(q,n')$.
	
	In fact, when $p=3$ and $n=n{'}\cdot 3^{i}$, where $3\nmid n{'}$, the subsequent lemma provides bounds for $\rho(q,n)$, which will be used in further discourse.
	\begin{lem}\textbf{({\cite{SS}}, Lemmas 6.1, 7.1)}\lb{lem6.3}
		Let $q=3^k$ and $n{'}>4$ be such that $3\nmid n{'}$. Then the following hold:
		\begin{itemize}
			\item[(i)] If $n{'}=2\cdot gcd(q-1,n{'})$, then $d=2$ and $\rho(q,n{'})=1/2$.\\
			\item[(ii)]If $n{'}=4\cdot gcd(q-1,n{'})$ and $q\equiv 1(mod~ 4)$, then $d=4$ and  $\rho(q,n{'})=3/8$.\\
			\item[(iii)] $\rho(3,16)=5/16$, otherwise $\rho(3,n{'})\leq 1/4$.\\
			\item[(iv)] Elsewhere, $\rho(q,n{'})\leq 1/3$.
		\end{itemize}
	\end{lem}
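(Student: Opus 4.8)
The plan is to translate everything into the language of $q$-cyclotomic cosets modulo $n'$. Since $q=3^k$ and $3\nmid n'$ we have $\gcd(q,n')=1$, so $x^{n'}-1$ is separable and, by the factorisation of $x^{n'}-1$ over $\Field_q$ recalled above from [\cite{RH}], its irreducible factors are in bijection with the $q$-cyclotomic cosets $C_a=\{aq^{i}\bmod n':i\geq 0\}$, the degree of the factor attached to $C_a$ being $|C_a|=\operatorname{ord}_{n'/\gcd(a,n')}(q)$. Equivalently, writing $x^{n'}-1=\prod_{f\mid n'}\Phi_f(x)$, the cyclotomic polynomial $\Phi_f$ splits into $\phi(f)/\operatorname{ord}_f(q)$ irreducible factors, each of degree $\operatorname{ord}_f(q)$. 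Because $\operatorname{ord}_f(q)\mid\operatorname{ord}_{n'}(q)$ whenever $f\mid n'$, every coset size divides $d=\operatorname{ord}_{n'}(q)$, the maximal degree (attained at $f=n'$), and
\begin{equation}\nonumber
N_0=\sum_{\substack{f\mid n'\\ \operatorname{ord}_f(q)<d}}\frac{\phi(f)}{\operatorname{ord}_f(q)},\qquad \rho(q,n')=\frac{N_0}{n'}.
\end{equation}
This identity is the backbone of all four parts.

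For (i) and (ii) I would compute $d$ and then count the small cosets directly. In (i), with $g=\gcd(q-1,n')$ and $n'=2g$, one checks $2g\mid (q-1)(q+1)=q^2-1$ (using $g\mid q-1$ and $2\mid q+1$) while $n'\nmid q-1$, so $d=2$; the degree-$1$ factors correspond to $a$ with $(n'/g)\mid a$, i.e.\ $2\mid a$, giving exactly $n'/2$ of them and hence $\rho=\tfrac12$. In (ii) the hypothesis $q\equiv1\pmod4$ together with $n'=4g$ forces $4\mid g$; writing $q=1+gs$ one finds $s$ odd (else $\gcd(q-1,n')>g$), and since $4\mid g$ the expansion $q^{j}\equiv1+jgs\pmod{4g}$ shows $q^{j}\equiv1$ iff $4\mid j$, so $d=4$. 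Counting the cosets of size $1$ ($4\mid a$, giving $g$ of them) and of size $2$ ($a\equiv2\pmod4$, giving $g/2$ cosets) yields $N_0=g+g/2=3g/2$ and $\rho=\tfrac{3g/2}{4g}=\tfrac38$.

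The genuinely hard part is the uniform upper bounds (iii) and (iv), where no special arithmetic of $n'$ is assumed. The strategy is to bound $N_0=\sum_{\operatorname{ord}_f(q)<d}\phi(f)/\operatorname{ord}_f(q)$ from above. Two structural facts drive the estimate: the set $\{f\mid n':\operatorname{ord}_f(q)=d\}$ is an up-set in the divisor lattice of $n'$ (if $f\mid f'\mid n'$ and $\operatorname{ord}_f(q)=d$ then $\operatorname{ord}_{f'}(q)=d$), so the divisors carrying order $<d$ form a down-set whose dominant contributions come from the near-maximal divisors $f=n'/p$ ($p\mid n'$ prime) and their divisors; and every small coset has size a proper divisor of $d$, hence at most $d/2$. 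One then splits into cases according to $d$: the case $d=2$ reduces, outside the configuration of (i), to an explicit divisor count giving $\rho\le\tfrac13$ (resp.\ $\tfrac14$ when $q=3$, where $\gcd(n',q-1)\le2$ sharply limits the number of linear factors); the cases $d=3,4$ are handled by the same bookkeeping refined by the $2$-adic and $3$-adic structure of $\gcd(q-1,n')$ and $\gcd(q+1,n')$; and for large $d$ the scarcity of divisors $f$ with $\operatorname{ord}_f(q)<d$ makes $\sum_{\operatorname{ord}_f(q)<d}\phi(f)$ a small fraction of $n'$. The single value $\rho(3,16)=\tfrac{5}{16}$ must be separated out as a true exception: here $q=3$, $d=\operatorname{ord}_{16}(3)=4$, and tallying the factors of $\Phi_1,\Phi_2,\Phi_4,\Phi_8,\Phi_{16}$ gives $N_0=1+1+1+2=5$. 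I expect the main obstacle to lie precisely in organising this case analysis so that every remaining $(q,n')$ is shown to obey $\rho\le\tfrac14$ or $\tfrac13$: the counting itself is elementary once the divisors of small order are pinned down, but ensuring the enumeration is exhaustive, and that $n'=16$ is the only leak, is where the care is needed. This is exactly the computation carried out in Cohen--Huczynska \cite{SS}, Lemmas 6.1 and 7.1.
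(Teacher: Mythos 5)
The paper contains no proof of this lemma for you to be compared against: it is quoted, with explicit attribution, from Cohen and Huczynska \cite{SS} (their Lemmas 6.1 and 7.1) and is used later purely as a black box. So your reconstruction has to be judged on its own. The framework you set up is the right one (and is the one underlying \cite{SS}): since $\gcd(q,n')=1$, the irreducible factors of $x^{n'}-1$ over $\Field_q$ correspond to $q$-cyclotomic cosets modulo $n'$, equivalently $N_0=\sum_f \phi(f)/\operatorname{ord}_f(q)$, summed over the divisors $f\mid n'$ with $\operatorname{ord}_f(q)<d$. Your treatments of (i) and (ii) are correct and complete: in (i), $g\mid q-1$ and $2\mid q+1$ force $d\mid 2$ while $d\neq 1$, and the linear factors number $\gcd(q-1,n')=n'/2$; in (ii), the congruence $q^{j}\equiv 1+jgs\pmod{4g}$ (valid because $4\mid g$, with $s$ odd) pins down $d=4$, and the coset count $g+g/2=3g/2$ gives $\rho=3/8$. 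The computation $\rho(3,16)=5/16$ is also right: $d=\operatorname{ord}_{16}(3)=4$, and $\Phi_1,\Phi_2,\Phi_4,\Phi_8$ contribute $1+1+1+2=5$ factors of degree less than $4$, while $\Phi_{16}$ contributes none.

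The genuine gap is in (iii) and (iv), which is exactly where all the substance of the cited lemmas lies. For the uniform bounds $\rho(3,n')\le 1/4$ (apart from $n'=16$) and $\rho(q,n')\le 1/3$ you offer only a strategy --- small cosets have size at most $d/2$, the divisors of order $d$ form an up-set, split into cases according to $d$ --- and then state that the exhaustive case analysis ``is exactly the computation carried out in Cohen--Huczynska.'' That is a citation, not a proof: nothing in your sketch actually excludes further exceptional values of $n'$ beyond $16$, and isolating $n'=16$ as the unique leak is precisely the delicate part of the original argument. As a self-contained proof your proposal is therefore incomplete. In the context of this paper the incompleteness is harmless, since the authors themselves import the lemma by citation and prove nothing; but then the honest form of the write-up is the one-line attribution the paper uses, to which your verifications of (i), (ii) and $\rho(3,16)$ are a correct supplement rather than a substitute for the missing cases.
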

	\begin{lem}\lb{lem6.4}
		Let $q=p^k$; $k\in\mathbb{N}$ and $n=n'\cdot q^{i}$; $i\geq 0$, where $n'$ and $q$ are co-prime with $n'\nmid q-1$. Suppose $d(>2)$ be the order of $q$ modulo $n'$. Moreover, let $e$ be equal to $L_{q^n-1}$ and $g$ be the product of all irreducible factors of $x^{n'}-1$ having degrees less than $d$. Then, in accordance with the proposition (\ref{Prop4.4}), we have $\Delta <2n'$. 
	\end{lem}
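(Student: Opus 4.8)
The plan is to bound the quantity $\Delta = \frac{mr+s-1}{\delta}+2$ from Proposition \ref{Prop4.4} by estimating its constituent pieces in the specialized setting $m=2$, $e=L_{q^n-1}$, and $g$ equal to the product of low-degree irreducible factors of $x^{n'}-1$. Since we take $e=L_{q^n-1}$, there are no primes dividing $L_{q^n-1}$ that do not divide $e$, so $r=0$; this immediately simplifies $\Delta$ to $\frac{s-1}{\delta}+2$, where $s$ is the number of irreducible factors of $x^n-1$ not dividing $g$ and $\delta = 1-\sum_{i=1}^{s}q^{-\deg(g_i)}$. The whole problem thus reduces to controlling $s$ and bounding $\delta$ away from $0$.

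First I would use the structural description of the factorization of $x^{n'}-1$ recorded just before the lemma: since $W(x^n-1)=W(x^{n'}-1)$ and the irreducible factors all have degree at most $d$, I can split the factors of $x^n-1$ into those of degree less than $d$ (which, by construction, are exactly the ones comprising $g$) and those of degree exactly $d$. The factors not dividing $g$ are therefore all of degree precisely $d$, so each contributes $q^{-d}$ to the sum defining $\delta$. If $N_0$ denotes the number of factors of degree $<d$ and $\mathcal{N}$ the total number of irreducible factors, then $s = \mathcal{N}-N_0$ and $\delta = 1 - s\,q^{-d}$. The next step is to translate $N_0$ and $\mathcal{N}$ into the density $\rho(q,n')=N_0/n'$ of Lemma \ref{lem6.3}: the total number of factors satisfies $\mathcal{N}\le N_0 + (n'-N_0)/d$, since the remaining $n'-N_0$ roots are partitioned into orbits of size exactly $d$, giving $s \le (n'-N_0)/d = n'(1-\rho(q,n'))/d$.

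Then I would plug these estimates into $\Delta = \frac{s-1}{\delta}+2$ and aim to show it is strictly less than $2n'$. Using $\delta = 1-s\,q^{-d}\ge 1 - n' q^{-d}$ and $s\le n'(1-\rho(q,n'))/d$, together with the fact that $d>2$ forces $q^d$ to be comfortably large relative to $n'$ (so that $\delta$ is bounded below, say by a constant like $1/2$ or better once $n'q^{-d}$ is small), the inequality $\frac{s-1}{\delta}+2<2n'$ should follow from the crude bound $s < n'/d \le n'/3$ supplied by $d>2$ and the $\rho$-bounds of Lemma \ref{lem6.3}. The cleanest route is to observe $\frac{s}{\delta}<2(n'-1)$ suffices, and since $s$ is already a modest fraction of $n'$ while $\delta$ is close to $1$, this holds with substantial room to spare.

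The main obstacle I anticipate is verifying that $\delta$ stays adequately bounded below across all the cases of Lemma \ref{lem6.3}, particularly the boundary regimes where $d=4$ and $\rho=3/8$ or the exceptional value $\rho(3,16)=5/16$, since there $s/n'$ is largest and $q^{-d}$ controls $\delta$ most delicately. Because the lemma's hypothesis $d>2$ rules out the $d=2$ case $(\rho=1/2)$, the worst remaining density is $3/8$, and I would check that even there $q^d\ge q^4$ is large enough (for $q\ge 3$) that $n'q^{-d}$ is small and $\delta>1/2$ comfortably, closing the estimate $\Delta<2n'$ uniformly.
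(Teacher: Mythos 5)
Your reduction is sound up to the last step: with $e=L_{q^n-1}$ we indeed have $r=0$; the distinct irreducible factors of $x^n-1$ are exactly those of $x^{n'}-1$ (since $x^n-1=(x^{n'}-1)^{q^i}$ in characteristic $p$); every factor outside $g$ has degree exactly $d$; and $sd\le n'-N_0$, so $\Delta=\frac{s-1}{\delta}+2$ with $\delta=1-sq^{-d}$. The genuine gap is in your lower bound for $\delta$. You invoke $\delta\ge 1-n'q^{-d}$ and claim that ``$d>2$ forces $q^d$ to be comfortably large relative to $n'$,'' so that $n'q^{-d}$ is small and $\delta>1/2$. That justification fails on both counts: the relation between $n'$ and $q^d$ has nothing to do with $d>2$; it comes from $d$ being the multiplicative order of $q$ modulo $n'$, which gives $n'\mid q^d-1$ and hence only $n'\le q^d-1$. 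Consequently $n'q^{-d}$ can be as large as $1-q^{-d}$, and your bound can degenerate to $\delta\ge q^{-d}$. Concretely, take $q=3$, $n'=26$, so $d=\mathrm{ord}_{26}(3)=3$: your estimate gives $\delta\ge 1/27$, and since $s=8$ here (the factors of $x^{26}-1$ over $\mathbb{F}_3$ are $x\pm 1$ and eight cubics), the resulting bound on $\frac{s-1}{\delta}+2$ is $7\cdot 27+2=191$, which is far from the required $2n'=52$. So the argument as assembled does not close.

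The repair uses only ingredients you already wrote down, combined differently: put the division by $d$ \emph{inside} the bound for $\delta$, rather than using the crude $s\le n'$ there. From $s\le (n'-N_0)/d<n'/d$ and $n'\le q^d-1<q^d$ one gets $\delta=1-sq^{-d}>1-\frac{n'}{dq^d}>1-\frac{1}{d}\ge \frac{2}{3}$, and this is the only place where $d>2$ is actually needed. Then $\Delta=\frac{s-1}{\delta}+2<\frac{3}{2}(s-1)+2=\frac{3}{2}s+\frac{1}{2}\le\frac{n'}{2}+\frac{1}{2}<2n'$. (In the example above this gives the true values $\delta=19/27$ and $\Delta\approx 11.9<52$.) Note that the $\rho$-bounds of Lemma \ref{lem6.3} and the case analysis in your final paragraph then become superfluous: the crude bound $s<n'/d$ suffices uniformly. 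For what it is worth, the paper itself gives no proof of this lemma, deferring to [\cite{MASI}, Lemma 10], so a corrected version of your direct argument would be a genuinely self-contained alternative.
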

	\begin{proof}
		We omit the proof of the lemma, as it follows from [\cite{MASI}, Lemma 10].
	\end{proof}
	\begin{lem}
		Let $q=3^k; k=1,2, n\geq 9$ and $n{'}\nmid q^{2}-1$. Then $(q,n)\in\mathcal{S}_{2}$ with possible exceptions  $(3,10)$ and $(3,16)$.
	\end{lem}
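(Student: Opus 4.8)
The plan is to deduce $(q,n)\in\mathcal{S}_{2}$ from the sieved criterion of Proposition \ref{Prop4.4}, the decisive new feature being that the hypothesis $n'\nmid q^{2}-1$ forces the order $d$ of $q$ modulo $n'$ to exceed $2$: were $d\leq 2$ we would get $n'\mid q^{d}-1\mid q^{2}-1$, a contradiction. Thus Lemma \ref{lem6.4} applies. First I would take $e=L_{q^{n}-1}$, so that no rational prime is sieved out ($r=0$), and let $g$ be the product of all irreducible factors of $x^{n'}-1$ of degree strictly less than $d$; Lemma \ref{lem6.4} then gives $\Delta<2n'$. Since $x^{n'}-1$ is square-free (because $\gcd(n',q)=1$), the chosen $g$ is a product of $N_{0}=n'\rho(q,n')$ distinct irreducibles, whence $W(g)=2^{N_{0}}$ and $W(\bar{e})=W(L_{q^{n}-1})^{2}$. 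The condition $q^{\frac{n}{2}-2}>2W(\bar{e})W(g)\Delta$ of Proposition \ref{Prop4.4} is therefore implied by
\begin{equation}\nonumber
q^{\frac{n}{2}-2}>4\,n'\,W(L_{q^{n}-1})^{2}\,2^{N_{0}}.
\end{equation}

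Next I would bound the right-hand side in terms of $q,n,n'$ alone. By the remark following Lemma \ref{L5} we have $W(L_{q^{n}-1})\leq W(q^{n}-1)$, and Lemma \ref{L3} gives $W(q^{n}-1)<\mathcal{C}\,q^{n/\nu}$ for a suitable parameter $\nu$. Because $d>2$, case (i) of Lemma \ref{lem6.3}, the only one yielding $\rho=1/2$, is excluded, so $\rho(q,n')\leq 3/8$; moreover for $q=3$ we have $3\not\equiv 1\pmod{4}$, so case (ii) is also vacuous and $\rho(3,n')\leq 5/16$. Substituting these estimates, the sufficient condition reduces to the explicit inequality
\begin{equation}\nonumber
q^{\frac{n}{2}-2-\frac{2n}{\nu}}>4\,n'\,\mathcal{C}^{2}\,2^{\,n'\rho(q,n')}.
\end{equation}
Writing $n=n'\cdot 3^{i}$ with $\gcd(n',3)=1$, the exponent on the left grows linearly in $n'3^{i}$ while the right-hand side grows only like $2^{n'\rho}$, so for $\nu$ taken large enough the inequality holds for all $(n',i)$ outside a finite range.

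The bookkeeping then mirrors the $n'\mid q^{2}-1$ lemma: I would treat $q=3$ and $q=9$ separately, split into the ranges prescribed by the admissible values of $\rho(q,n')$ in Lemma \ref{lem6.3}, and choose $\nu$ adaptively on each range so as to leave only a short finite list of pairs $(q,n)$ with small $n$. As a cruder first pass, the corollary bound (\ref{eq11}) together with Lemmas \ref{L3} and \ref{L4} already disposes of the generic pairs; for each surviving pair I would then factor $x^{n'}-1$, compute $W(L_{q^{n}-1})$ and an optimal admissible $(e,g)$, and test inequality (\ref{eq13}) in SageMath. The expected outcome is $(q,n)\in\mathcal{S}_{2}$ for every remaining pair except $(3,10)$ and $(3,16)$.

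The main obstacle is precisely this last verification. For $(3,10)$ and $(3,16)$ one has $d=4$ and a comparatively large $\rho$ (with $\rho(3,16)=5/16$), so the character-sum estimate of Lemma \ref{L2} is too weak, and one must check by hand that no admissible choice of $(e,g)$ in Proposition \ref{Prop4.4} turns (\ref{eq13}) into a valid inequality. This is a finite but delicate search rather than a single clean estimate, which is why these two pairs persist as \emph{genuine} exceptions of the method and can only be recorded as possible exceptions.
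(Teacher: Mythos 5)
Your proposal follows essentially the same route as the paper: use $n'\nmid q^{2}-1$ to force $d>2$, apply Proposition \ref{Prop4.4} with $e=L_{q^{n}-1}$ and $g$ the product of the irreducible factors of $x^{n'}-1$ of degree less than $d$ (so $\Delta<2n'$ by Lemma \ref{lem6.4}), bound $W(g)$ through the $\rho$-estimates of Lemma \ref{lem6.3} and $W(L_{q^{n}-1})$ through Lemma \ref{L3} to get an explicit threshold on $n$, and then settle the remaining finite range computationally, with exactly $(3,10)$ and $(3,16)$ surviving. The only minor deviation is the case $q=9$: the paper there bypasses the sieve, combining the basic inequality (\ref{eq11}) with the bound $W(x^{n'}-1)\leq 2^{3n'/4}$ from Lemma \ref{L4} (valid since $n'\nmid q-1$), whereas you run the sieve with $\rho(9,n')\leq 3/8$; both give a finite threshold and the same exceptional list.
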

	\begin{proof}
		\textbf{Case $(i)$}: Let $k=1$. It is clear that $n'\geq 3$ and from Lemma \ref{lem6.3}, it follows that $\rho(3,n')\leq 1/4$ except $n'=16$. We consider $e$ as $L_{q^n-1}$ and $g$ as the product of all irreducible factors of $x^{n'}-1$ with degrees less than $d$. Then by Lemma \ref{lem6.4} and the inequality (\ref{eq14}), the pairs $(q,n)\in\mathcal{S}_{2}$, if the inequality $3^{\frac{n}{2}-2}>2~\mathcal{C}^{2}3^{\frac{2n}{\nu}}2^{\frac{n}{4}}2n$ is true. This  inequality is valid for $n\geq 277$ and $\nu=9.2$. For $ n\leq 276$, we verify the inequality (\ref{eq11}) and find that $(3,n)\in\mathcal{S}_{2}$ except for $n=10, 11, 13, 14, 15, 20, 22$. For these exceptions, we choose the values of $e$ and $g$ as mentioned in Table \ref{Table3}, and test the inequality \ref{eq13}. Hence we have, $(3,n) \in\mathcal{S}_{2}$ except for $n=10$.
		
		Now we consider the case $n'=16$. Here we test the inequality $3^{\frac{16\cdot 3^{i}}{2}-2}>64~\mathcal{C}^{2}~3^{\frac{32\cdot 3^{i}}{\nu}}~2^{5/16}$ and get that $(3,16\cdot 3^{i})\in\mathcal{S}_{2}$ for all $i\geq 2$ and $\nu=9.2$. Among the exceptionas $(3,16),(3,48)$, the last one satisfies the inequality (\ref{eq11}), however for the remaining one , the inequality (\ref{eq14}) does not hold for any choice of $e$ and $g$. 
		
		\textbf{Case $(ii)$}: Let $k=2$. If $n^{'}\nmid q^{2}-1$, then $n'\nmid q-1$ and by Lemma \ref{L4} $W(x^{n'}-1)\leq 2^{3n'/4}$. Then the inequality (\ref{eq11}) is true if $9^{{n}/{2}-2}>2~ \mathcal{C}^{2}9^{{2n}/{\nu}}2^{{3n}/{4}}$ is true and the latter is true for $n\geq 335$ and $\nu=9.2$. For $9\leq n\leq 334$, we verify the inequality (\ref{eq11}) and find that $(9, n)\in\mathcal{S}_{2}$ except for $n = 11, 14$. For these exceptional cases, we choose values for $e$ and $g$ as listed in Table \ref{Table3} for which the inequality (\ref{eq14}) is true and get that $(9, n) \in \mathcal{S}_{2}$.
	\end{proof}
	
	\subsection{\textbf{Part II}}
	In this section, we shall verify the cases where $n$ takes values of $6, 7,~\text{or}~ 8$. We shall also utilize the following result for further calculations.
	\begin{lem}\textbf{({\cite{WQA}}, Lemma 5.4)}\lb{L5.1}
		For any $M\in\mathbb{N}$ satisfying $\om(M)\geq 2828$, we have $W(M)<M^{\frac{1}{13}}$.
	\end{lem}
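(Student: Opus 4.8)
The plan is to translate the bound into a statement about primorials and then to reduce it to a single, sharp numerical check. The starting point is that $W(M)$, the number of square-free divisors of $M$, depends only on the distinct prime divisors of $M$, so that $W(M)=2^{\om(M)}$. Writing $u=\om(M)$ and letting $q_{1}<q_{2}<\cdots<q_{u}$ be the distinct primes dividing $M$, each $q_{i}$ is at least the $i$-th prime $p_{i}$, whence
\[
M\geq\prod_{i=1}^{u}q_{i}\geq\prod_{i=1}^{u}p_{i}=:P_{u}.
\]
Thus $M^{1/13}\geq P_{u}^{1/13}$, and it is enough to prove that
\[
g(u):=\frac{2^{u}}{P_{u}^{1/13}}<1\qquad\text{for every }u\geq 2828,
\]
since then $W(M)=2^{u}<P_{u}^{1/13}\leq M^{1/13}$.

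Next I would exploit the monotonicity of $g$. One computes $g(u+1)/g(u)=2\,p_{u+1}^{-1/13}$, so $g$ is strictly decreasing exactly when $p_{u+1}>2^{13}=8192$. The number of primes below $8192$ is far smaller than $2828$ (indeed $\pi(8192)=1028$, which is what makes the threshold useful), so $p_{u+1}>8192$ for all $u\geq 2828$; hence $g$ is strictly decreasing on $\{u\geq 2828\}$ and $g(u)\leq g(2828)$ there. The whole lemma therefore collapses to the single inequality $g(2828)<1$, i.e.
\[
P_{2828}=\prod_{i=1}^{2828}p_{i}>2^{13\cdot 2828}=2^{36764},
\]
equivalently $\theta(p_{2828})>36764\ln 2\approx 25483$, where $\theta(x)=\sum_{p\leq x}\ln p$ is the Chebyshev function.

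This final step is where the value $2828$ is pinned down, and it is the main obstacle, because the inequality is sharp. With $p_{2828}\approx 25500$ and $\theta(p_{2828})\approx p_{2828}$, the target $25483$ is exceeded by only a few tens out of roughly $25500$; in particular the crude approximation $p_{k}\approx k\ln k$ (which would predict failure) is useless here, and the correction term in $p_{k}\approx k(\ln k+\ln\ln k-1)$ is indispensable. Consequently the estimate cannot be soft: I would either compute $\theta(p_{2828})$ exactly by summing the logarithms of the first $2828$ primes, or combine an explicit lower bound for $p_{2828}$ (such as $p_{k}\geq k(\ln k+\ln\ln k-1)$) with a sharp explicit lower bound for $\theta$ strong enough to clear $25483$. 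Every other part of the argument is routine; the entire difficulty is concentrated in resolving this narrow gap near the threshold.
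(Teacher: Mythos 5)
Note first that the paper never proves this statement: it is quoted verbatim from \cite{WQA} (Lemma 5.4 there), so the comparison is with the standard proof in that source, and your argument is exactly that proof. Writing $W(M)=2^{\omega(M)}$, bounding $M$ below by the primorial, observing that the ratio $2^{u}/\bigl(\prod_{i\leq u}p_{i}\bigr)^{1/13}$ decreases in $u$ as soon as $p_{u+1}>2^{13}$ (true from $u\geq 1028$ on, since $p_{1028}=8191$ and $p_{1029}=8209$), and reducing everything to the single inequality $\prod_{i=1}^{2828}p_{i}>2^{13\cdot 2828}$ is precisely how such lemmas are established in this literature. Your numerics are also right: the base inequality does hold, but barely, since $p_{2828}=25673$ and $\theta(p_{2828})\approx 25510$ against the target $36764\ln 2\approx 25483$, a margin of roughly $27$ (about $39$ powers of $2$).

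One concrete warning about your second proposed way of finishing: combining the bound $p_{k}\geq k(\ln k+\ln\ln k-1)$ with a lower bound for $\theta$ cannot succeed, no matter how sharp the $\theta$ bound is. That estimate only yields $p_{2828}\geq 25509$, and the \emph{true} value $\theta(25509)\approx 25347$ already falls short of $25483$; so from the information ``$p_{2828}\geq 25509$'' alone, no valid lower bound on $\theta$ can clear the target. Your first route --- summing $\ln p_{i}$ over the first $2828$ primes (equivalently, computing the primorial exactly) --- is the one that works, and is what the cited source implicitly relies on; the only analytic alternative would be an explicit lower bound for $\theta(p_{k})$ stated directly as a function of $k$, not routed through a lower bound on $p_{k}$.
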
First, let us suppose that $\om(q^n-1)\geq 2828$. Clearly, in this part we have $W(x^n-1)\leq 2^8$. Then (\ref{eq11}) and the Lemma \ref{L5.1} together implies that $(q,n)\in \mathcal{S}_{2}$ if we have $q^{\frac{n}{2}-2}>2^{9}\cdot q^{\frac{2n}{13}}$ or equivalently if $q^{\frac{9n}{26}-2}>2^{9}$, that is if we have $q^n>512^{\frac{26n}{9n-52}}$. This holds for $q^n>512^{78}$, which is true for $\om(q^n-1)\geq 2828$. For further progress we follow \cite{WQA}. We now assume that $88\leq \om(q^n-1)\leq 2827$. Choosing $g={x^n-1}$ and $e$ to be the product of least $88$ primes dividing $L_{q^n-1}$ i.e., $W(e)=2^{88}$. Then $r \leq 2739$ and $\delta$
	will be least when $\{p_{1},p_{2},\ldots,p_{2739}\} = \{461,463,\ldots,25667\}$. This gives $\delta>0.0044306$ and $\Delta<1.2362\times 10^6$, hence $2\Delta W(e)^{2} W(g)<6.07\times 10^{61}=T$(say). By Proposition \ref{Prop4.4}, $(q,n)\in\mathcal{S}_{2}$ if we have $q^{\frac{n}{2}-2}>T$ or $q^n>T^{\frac{2n}{n-4}}$. But $n\geq 6$ implies $\frac{2n}{n-4}\leq 6$. Therefore,
	whenever $q^{n}>T^{6}$ or $q^{n}>5.00187\times 10^{370}$ then $(q,n)\in\mathcal{S}_{2}$. Hence, $\om(q^n-1)\geq 154$ gives $(q,n)\in\mathcal{S}_{2}$. Iterating the process (of Proposition \ref{Prop4.4}) for the values in Table \ref{Table2}, we get that $(q,n)\in\mathcal{S}_{2}$, if we have $q^{\frac{n}{2}-2}>2.7829\times 10^{10}$. This determines  the cases $n=6, q>2.7829\times 10^{10}$; $n=7, q>(2.7829\times 10^{10})^{{2}/{3}} $ and $n=8, q>(2.7829\times 10^{10})^{1/2}$. Thus, the only possible exceptions are $(3,6),(3^2,6),\ldots,(3^{21},6)$; $(3,7),(3^2,7),\ldots,(3^{14},7)$ and $(3,8),(3^2,8),\ldots,(3^{10},8)$. For these pairs,  first we verify the inequality (\ref{eq11}) and then for the remaining pairs, choose $e,g$ as mentioned in the Table \ref{Table4} and verify (\ref{eq13}). Hence, only possible exceptions are $(3,6),(3^2,6),(3^3,6),(3,7),(3,8)~\text{and}~(3^2,8)$.
	\begin{center}
		\begin{table}[h]
			\centering
			\caption{Values of $W(e)$, $\delta$ and $\Delta$ such that Proposition \ref{Prop4.4} holds}
			\begin{tabular}{cccccc}
				\hline $Sr.no$ & $c\leq \om(q^n-1)\leq d$ & $W(e)$ & $\delta>$& $\Delta<$& $2\Delta W(e)^{2}W(g)<$\\
				\hline $1$ & $a=17, b=153$ & $2^{17}$ & $0.0304555$ &$8900.2013271$&$7.8287\times 10^{16}$ \\
				$2$ & $a=9, b=54$ & $2^{9}$ &$0.03063362$& $2907.3044905$& $3.9022\times 10^{11}$\\
				$3$ & $a=8, b=40$ & $2^{8}$ &$0.0761466$& $829.3503968$& $2.7829\times 10^{10}$\\
				\hline
			\end{tabular}
			\label{Table2}
		\end{table}
	\end{center}
	The above discussions leads us to conclude the following.
	\begin{thm}
		Let $q,k,n\in\mathbb{N}$ such that $q=3^k$ and $n\geq 6$. The $(q,n)\in\mathcal{S}_{2}$ unless the following possible exceptions:
		\begin{itemize}
			\item[1.] $q=3,3^2,3^3~\text{and}~ n=6$;
			\item[2.] $q=3,3^2~\text{and}~ n=8$;
			\item[3.] $q=3~\text{and}~ n=7,9,10,12,16$.
		\end{itemize}
	\end{thm}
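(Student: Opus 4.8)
The final statement is a synthesis that packages the case-by-case analysis of Section \ref{Sec5} into a single list, so the plan is to assemble the partial conclusions proved above and take the union of their exceptional sets. I would begin by recording that, since $q=3^k$ has characteristic $p=3$ and we are in the case $m=2\le p$, the standing hypothesis $m\le p$ needed throughout Section \ref{Sec4} is satisfied; membership $(q,n)\in\mathcal{S}_{2}$ is therefore governed by the unconditional criterion (\ref{eq11}) of the Corollary, sharpened where necessary by the sieved criterion (\ref{eq13}) coming from Proposition \ref{Prop4.4}.

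The organizing principle is the partition of the range $n\ge 6$ into the regime $n\ge 9$ (Part I) and the regime $n\in\{6,7,8\}$ (Part II). For $n\ge 9$ I would invoke the three lemmas of Part I in turn. The first disposes of $k\ge 3$ with no exceptions, using Lemmas \ref{L3} and \ref{L4} to control $W(L_{q^n-1})$ and $W(x^n-1)$, Table \ref{Table1} to cut down to finitely many pairs, and (\ref{eq13}) with the data of Table \ref{Table3} to clear the residue. For $k\in\{1,2\}$ one writes $n=n'\cdot 3^{i}$ with $\gcd(n',3)=1$ and splits on whether $n'\mid q^2-1$: the divisible subcase contributes the exceptions $(3,9)$ and $(3,12)$, while the non-divisible subcase, which relies on the bounds for $\rho(q,n')$ in Lemma \ref{lem6.3} together with the estimate $\Delta<2n'$ of Lemma \ref{lem6.4}, contributes $(3,10)$ and $(3,16)$.

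For $n\in\{6,7,8\}$ I would reproduce the sieve argument of Part II. Here the plan is first to eliminate the tail where $\om(q^n-1)$ is large by means of Lemma \ref{L5.1}, which forces $q^n$ below an explicit bound, and then to iterate Proposition \ref{Prop4.4} with the parameter choices tabulated in Table \ref{Table2} so as to push the threshold on $q$ downward in stages. This leaves the finite families $(3^k,6)$ for $1\le k\le 21$, $(3^k,7)$ for $1\le k\le 14$, and $(3^k,8)$ for $1\le k\le 10$; each is then tested against (\ref{eq11}) and, when that fails, against (\ref{eq13}) with $e$ and $g$ selected as in Table \ref{Table4}, which survives exactly the pairs $(3,6),(3^2,6),(3^3,6),(3,7),(3,8),(3^2,8)$.

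To conclude I would form the union of the four exceptional sets just described and verify that it reorganizes precisely into the three displayed cases, grouping by the value of $n$. I do not expect a conceptual obstacle, since the theorem is a bookkeeping statement; the genuine difficulty is entirely computational, namely ensuring that the finite \textsc{SageMath} verifications of (\ref{eq11}) and (\ref{eq13}) are exhaustive over every surviving pair, that the tabulated choices of $e$ and $g$ in Tables \ref{Table3} and \ref{Table4} actually validate the relevant inequality in each instance, and that no pair is lost or double-counted when the sieve thresholds of Table \ref{Table2} are translated into the final exception list.
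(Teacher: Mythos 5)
Your proposal is correct and follows essentially the same route as the paper: the theorem is indeed proved by taking the union of the exceptional sets from the three lemmas of Part I (covering $n\geq 9$, split by $k\geq 3$, $k\in\{1,2\}$ with $n'\mid q^2-1$, and $k\in\{1,2\}$ with $n'\nmid q^2-1$) together with the sieve analysis of Part II for $n\in\{6,7,8\}$, exactly as you describe. Your accounting of the surviving pairs and their regrouping into the three displayed cases matches the paper's conclusion.
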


	\begin{center}
		\begin{table}[h]
			\centering
			\caption{Pairs $(q,n)\in\mathcal{S}_{2}(6\leq n\leq 8)$ by Sieve Technique}
			\begin{tabular}{ccccccc}
				\hline $Sr.no$ & $(q,n)$ & $e$  & $g$&$\delta$&$\Delta$&\\
				\hline 
				$1$&$(3^4,6)$&$1$&$1$&$0.459261859983295$&$25.9514772691991$&\\
				$2$&$(3^5,6)$&$1$&$x+2$&$0.447087519356890$&$31.0770809677259$&\\
				$3$&$(3^6,6)$&$1$&$x+2$&$0.407896267111061$&$33.8708481743972$&\\
				$4$&$(3^7,6)$&$1$&$x+2$&$0.508470251621857$&$27.5668841166895$&\\
				$5$&$(3^8,6)$&$1$&$x+2$&$0.377733579667098$&$52.3000024957934$&\\
				$6$&$(3^9,6)$&$1$&$x+2$&$0.531452453574178$&$33.9878097949682$&\\
				$7$&$(3^{10},6)$&$1$&$x+2$&$0.0589802048533907$&$324.141980470041$&\\
				$8$&$(3^{11},6)$&$1$&$x+2$&$0.527461761956088$&$30.4380804105545$&\\
				$9$&$(3^{12},6)$&$1$&$x+2$&$0.788944166092470$&$21.0127522892943$&\\
				$10$&$(3^{13},6)$&$1$&$x+2$&$0.514520279376933$&$42.8147177899972$&\\
				$11$&$(3^{14},6)$&$1$&$x+2$&$0.0158565410261152$&$1452.50550193259$&\\
				$12$&$(3^{15},6)$&$1$&$x+2$&$0.435349772783422$&$59.4250902674457$&\\
				$13$&$(3^{16},6)$&$1$&$x+1$&$0.506021983577781$&$43.5001732761123$&\\
				$14$&$(3^{17},6)$&$1$&$x+1$&$0.529063818388427$&$41.6927540121110$&\\
				$15$&$(3^{18},6)$&$1$&$x+1$&$0.549184527812597$&$36.5967503412324$&\\
				
				$16$&$(3^{19},6)$&$1$&$x+1$&$0.550290107689489 $&$40.1616890919101$&\\
				$17$&$(3^{20},6)$&$1$&$x+1$&$0.403312899984737$&$68.9455403013932$&\\
				
				$18$&$(3^{21},6)$&$1$&$x+1$&$0.482470736843123$&$53.8166141299650$&\\
				\hline
				$19$&$(3^2,7)$&$1$&$1$&$0.880659271367610$&$8.81307765111286$&\\
				$20$&$(3^3,7)$&$1$&$x+2$&$0.994049514027501$&$8.03591663728131$&\\
				$21$&$(3^4,7)$&$1$&$x+2$&$0.925423322543100$&$11.7252789947714$&\\
				$22$&$(3^5,7)$&$1$&$x+2$&$0.970001158998250 $&$8.18555961953322$&\\
				$23$&$(3^6,7)$&$1$&$x+2$&$0.938884907082541 $&$17.9763991164907$&\\
				$24$&$(3^7,7)$&$1$&$x+2$&$0.995131839337739 $&$12.0489197558537$&\\
				$25$&$(3^8,7)$&$1$&$x+2$&$0.925419354656423 $&$16.0476854461580$&\\
				$26$&$(3^9,7)$&$1$&$x+2$&$0.998164732621425 $&$10.0147091342228$&\\
				$27$&$(3^{10},7)$&$1$&$x+2$&$0.966344851863111 $&$13.3830999138579$&\\
				$28$&$(3^{11},7)$&$1$&$x+2$&$0.998170134660812 $&$10.0146657590777$&\\
				$29$&$(3^{12},7)$&$1$&$x+2$&$0.877317235655992 $&$28.2162864984664$&\\
				$30$&$(3^{13},7)$&$1$&$x+2$&$0.998170173334930 $&$8.01099908641203$&\\
				$31$&$(3^{14},7)$&$1$&$x+2$&$0.995131836990084$&$17.0733796693407$&\\
				\hline
				$32$&$(3^3,8)$&$1$&$x+2$&$0.196647108479655 $&$68.1082692774249$&\\
				$33$&$(3^4,8)$&$1$&$x+2$&$0.736789913062364$&$21.0013459085114$&\\
				$34$&$(3^5,8)$&$1$&$x+2$&$0.512573068211997$&$27.3622377105137$&\\
				$35$&$(3^6,8)$&$1$&$x+2$&$0.359216168215832$&$68.8121374358067$&\\
				
				$36$&$(3^7,8)$&$1$&$x+1$&$0.478010818428838$&$37.5640486461727$&\\
				
				$37$&$(3^8,8)$&$1$&$x+1$&$0.870923243421729$&$18.0748953547227$&\\
				
				$38$&$(3^9,8)$&$1$&$x+1$&$0.0784275783761321$&$269.762953221452$&\\
				
				$39$&$(3^{10},8)$&$1$&$x+1$&$0.421258807060023 $&$54.2244274334312$&\\
				\hline
			\end{tabular}
			\label{Table4}
		\end{table}
	\end{center}
	\begin{center}
		\begin{table}[h]
			\centering
			\caption{Pairs $(q,n)\in\mathcal{S}_{2}(n\geq 9)$ by Sieve Technique}
			\begin{tabular}{ccccccc}
				\hline $Sr.no$ & $(q,n)$ & $e$  & $g$&$\delta$&$\Delta$&\\
				\hline 
				$1$&$(3^2,9)$&$1$&$x+2$&$0.287369229453635$&$36.7984369064588$&\\
				$2$&$(3^4,9)$&$1$&$x+2$&$0.358733632763241$&$41.0261707333134$&\\
				\hline
				$3$&$(3^2,10)$&$1$&$x+2$&$0.223207625673814$&$55.7616040839764$&\\
				$4$&$(3^3,10)$&$1$&$x+1$&$0.390305143669876$&$37.8693709961489$&\\
				$5$&$(3^4,10)$&$1$&$x+1$&$0.623809806883578$&$30.8549487381806$&\\
				\hline
				$6$&$(3,11)$&$1$&$1$&$0.570960346628126
				$&$12.5086106862477$&\\
				$7$&$(3^2,11)$&$1$&$x+2$&$0.879613797583913$&$12.2317631041269$&\\
				\hline
				$8$&$(3^2,12)$&$5$&$x+2$&$0.0395087735635420$&$331.040838969402$&\\
				\hline
				$9$&$(3,13)$&$1$&$x+2$&$0.851849342948381
				$&$7.86958250468825$&\\
				$10$&$(3^3,13)$&$1$&$x+2$&$0.548585326900787$&$36.6345391834299$&\\
				\hline
				$11$&$(3,14)$&$1$&$x^2+2$&$0.991770382478721
				$&$7.04148953057416$&\\
				$12$&$(3^2,14)$&$1$&$x+2$&$0.408829006334904$&$36.2441455549059$&\\
				\hline
				$13$&$(3,15)$&$1$&$x+2$&$0.318218151661110
				$&$23.9974880862696$&\\
				$14$&$(3^2,15)$&$1$&$x+2$&$0.137697321077803$&$118.196886582561$&\\
				\hline
				
				$15$&$(3^2,16)$&$5$&$x^2+2$&$0.107160283432735$&$160.640864464221$&\\
				\hline
				
				$16$&$(3,18)$&$1$&$x+1$&$0.0651470072314125$&$155.498992892773$&\\
				\hline
				$17$&$(3,20)$&$5$&$x^4+2$&$0.734318736784925$&$14.2562581467072$&\\
				\hline
				$18$&$(3,22)$&$23$&$x^2+2$&$0.950143284148867$&$11.4722555536054$&\\
				\hline
				$19$&$(3,24)$&$1435$&$x+1$&$0.151781324860484$&$61.2958323975149$&\\
				\hline
			\end{tabular}
			\label{Table3}
		\end{table}
	\end{center}
\end{document}